\documentclass[preprint,a4paper,11pt,sort]{elsarticle}
\usepackage[top=20truemm,bottom=25truemm,left=20truemm,right=20truemm]{geometry}

\usepackage{amsmath,amssymb}
\usepackage{amsthm} 
\usepackage{tikz}
\usepackage{bm}
\usepackage{tcolorbox}
\usepackage{mathtools}

\definecolor{darkred}{rgb}{0.75,0,0}

\usepackage[mathlines]{lineno}
\newcommand*\patchAmsMathEnvironmentForLineno[1]{
  \expandafter\let\csname old#1\expandafter\endcsname\csname #1\endcsname
  \expandafter\let\csname oldend#1\expandafter\endcsname\csname end#1\endcsname
  \renewenvironment{#1}
     {\linenomath\csname old#1\endcsname}
     {\csname oldend#1\endcsname\endlinenomath}}
\newcommand*\patchBothAmsMathEnvironmentsForLineno[1]{
  \patchAmsMathEnvironmentForLineno{#1}
  \patchAmsMathEnvironmentForLineno{#1*}}
\AtBeginDocument{
\patchBothAmsMathEnvironmentsForLineno{equation}
\patchBothAmsMathEnvironmentsForLineno{align}
\patchBothAmsMathEnvironmentsForLineno{flalign}
\patchBothAmsMathEnvironmentsForLineno{alignat}
\patchBothAmsMathEnvironmentsForLineno{gather}
\patchBothAmsMathEnvironmentsForLineno{multline}
}

\usepackage{thmtools} 
\usepackage[colorlinks=true]{hyperref}
\usepackage[capitalize,nameinlink]{cleveref}

\newtheorem{theorem}{Theorem}[section]

\newtheorem{lemma}[theorem]{Lemma}
\crefname{lemma}{Lemma}{Lemmas}

\newtheorem{claim}{Claim}[theorem]
\crefname{claim}{Claim}{Claims}

\crefname{corollary}{Corollary}{Corollaries}

\crefname{proposition}{Proposition}{Propositions}

\newtheorem{observation}[theorem]{Observation}
\crefname{observation}{Observation}{Observations}

\newenvironment{subproof}[1][\proofname]{%
  \begin{proof}[#1]%
}{%
  \end{proof}%
}

\newcommand{\proofsubparagraph}{\paragraph*}

\newcommand{\cp}{\mathbin{\square}} 
\newcommand{\tw}{\mathrm{tw}} 
\newcommand{\bn}{\mathrm{bn}} 

\newcommand{\dist}{\mathrm{dist}} 

\newcommand{\torus}[1]{T_{#1}} 
\newcommand{\Tn}{\torus{n}} 
\newcommand{\NTn}{N_{\Tn}} 

\newcommand{\Todd}{\torus{2m+1}} 
\newcommand{\NTodd}{N_{\Todd}} 

\newcommand{\Teven}{\torus{2m}} 
\newcommand{\NTeven}{N_{\Teven}} 

\newcommand{\Gi}{\Gamma_{\infty}} 
\newcommand{\NGi}{N_{\Gi}} 
\newcommand{\bGi}{\beta_{\infty}} 

\newcommand{\Z}{\mathbb{Z}} 

\begin{document}

\title{Treewidth of the $n \times n$ toroidal grid\tnoteref{t1}}
\tnotetext[t1]{%
Partially supported by JSPS KAKENHI Grant Numbers
JP22H00513, 
JP24H00697, 
JP24K23847, 
JP25K03076, 
JP25K03077, 
JP26KJ1299, 
and by JST SPRING Grant Number JPMJSP2125. 
} 

\author[hokkaido]{Tatsuya Gima}
\ead{gima@ist.hokudai.ac.jp}

\author[nagoya]{Hiraku Morimoto}
\ead{hiraku.morimoto@nagoya-u.jp}

\author[nagoya,jspsrf]{Yuto Okada}
\ead{research@yutookada.com}

\author[nagoya]{Yota Otachi}
\ead{otachi@nagoya-u.jp}

\affiliation[hokkaido]{
    organization={Hokkaido University},
    city={Sapporo},
    country={Japan}
}

\affiliation[nagoya]{
    organization={Nagoya University},
    city={Nagoya},
    country={Japan}
}

\affiliation[jspsrf]{
JSPS Research Fellow
}



\begin{abstract}

In this paper, we show that the treewidth of the $n \times n$ toroidal grid is $2n-1$ for all $n \ge 5$.
This closes the gap between the previously known upper bound of $2n-1$ (Ellis and Warren, DAM 2008) and
the lower bound of $2n-2$ (Kiyomi, Okamoto, and Otachi, DAM 2016).
To establish the matching lower bound, we construct a bramble of maximum order by utilizing maximum components obtained after removing $2n-1$ vertices. 
Our construction relies on the vertex-isoperimetric properties of the infinite grid to establish tight lower bounds on neighborhood sizes,
combined with a careful analysis of balls of radius $n/2-1$ and their boundaries to overcome structural obstructions when $n$ is even.

\par 
\end{abstract}

\begin{keyword}
treewidth, toroidal grid, bramble
\end{keyword}

\maketitle


\section{Introduction}
\label{sec:intro}

For integers $m, n \ge 3$, let $\torus{m,n}$ be the $m \times n$ toroidal grid;
that is, $\torus{m,n}$ is the Cartesian product $C_{m} \cp C_{n}$ of cycles $C_{m}$ and $C_{n}$ of length $m$ and $n$, respectively.
As we primarily focus on the case of $m = n$, we adopt the shorthand $\Tn$ for $\torus{n,n}$.

In this paper, we study the treewidth of $\Tn$, denoted $\tw(\Tn)$.
Ellis and Warren~\cite{EllisW08} showed that the pathwidth of $\Tn$ is $2n-1$.
Since treewidth is at most pathwidth for every graph, their result implies that $\tw(\Tn) \le 2n-1$.
Wood~\cite{Wood13} showed that $\tw(\Tn) \ge 2n-5$.
Kiyomi et al.~\cite{KiyomiOO16} improved the lower bound by showing that $\tw(\Tn) \ge 2n-2$.
Combining these results, it follows that $\tw(\Tn) \in \{2n-2, 2n-1\}$.
Aidun et al.~\cite{AidunDMYY20} studied the treewidth of possibly non-square toroidal grids $\torus{n,n+c}$ 
and showed that for $c \ge 2$, $\tw(\torus{n,n+c}) = 2n$ 
and that $\tw(\torus{n,n+1}) \in \{2n-1,2n\}$.

Our goal in this paper is to determine $\tw(\Tn)$.
We present an improved lower bound of $\tw(\Tn) \ge 2n-1$ for $n \ge 5$, which matches the known upper bound mentioned above and gives the following theorem.
\begin{theorem}
\label{thm:tw}
$\tw(\Tn) = 2n-1$ for $n \ge 5$.
\end{theorem}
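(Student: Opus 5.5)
The upper bound $\tw(\Tn)\le 2n-1$ is already known from the pathwidth result of Ellis and Warren, so the whole task is the lower bound $\tw(\Tn)\ge 2n-1$. I would prove it via the bramble duality of Seymour and Thomas: it suffices to exhibit a bramble $\mathcal{B}$ in $\Tn$ of order at least $2n$, i.e., a family of pairwise touching connected vertex sets such that no set $S$ of at most $2n-1$ vertices meets every member.

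\textbf{Construction.} Following the abstract, I would take $\mathcal{B}$ to consist of all vertex sets $C$ such that $C$ is a maximum-size connected component of $\Tn - S$ for some $S \subseteq V(\Tn)$ with $|S| = 2n-1$, possibly enlarged by the part of $S$ adjacent to it, or restricted to those components that are ``large'' in a suitable sense. Hitting is then automatic: any $S$ of size at most $2n-1$ (padded to size exactly $2n-1$) leaves a largest component of $\Tn-S$ that avoids $S$ and belongs to $\mathcal{B}$. So the order is at least $2n$ once the bramble property holds.

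\textbf{Touching.} The real content is that any two such components $C_1 = C(S_1)$ and $C_2 = C(S_2)$ touch. I would argue through sizes. First, using the vertex-isoperimetric inequality on the infinite grid $\Gi$, I would show that any vertex set $X$ in $\Tn$ that does not wrap around (its projection to the plane is injective on a fundamental domain) with $|X|$ up to about $n^2/2$ has $|N(X)|$ at least the size of the boundary of the optimal diamond-shaped ball of the same size. For sets that do wrap around a cycle direction, the boundary contains two ``transversal'' cuts, giving $|N(X)|\ge 2n$. Hence, after removing only $2n-1$ vertices, some component must be non-wrapping and of size greater than $n^2/2$; in fact the complement of the largest component is essentially small, at most a diamond whose boundary has $2n-1$ vertices. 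Then $|C_1|+|C_2| > n^2 - $ (small), and a counting argument shows that if $C_1$ and $C_2$ were disjoint and non-adjacent, then $C_2 \subseteq V\setminus N[C_1]$, which would be too small. For odd $n=2m+1$, the ball of radius $m$ has the right size and the inequalities should be strict.

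\textbf{Main obstacle.} I expect the even case $n=2m$ to be the main obstacle. There, the isoperimetric extremal sets (balls of radius $n/2-1$ together with their boundaries) allow $S$ of size $2n-1$ to split $\Tn$ into two components of nearly equal size, potentially non-touching, each complementary to a ball. My plan is to analyze these extremal configurations explicitly. I would characterize the $S$ of size $2n-1$ for which the largest component is not unique or is ball-like, show that in those cases $S$ is essentially the boundary of a radius-$(n/2-1)$ ball, and then modify the bramble. One option is to choose, for such $S$, the component in a canonical consistent way, for instance the one containing the complement of the ball, or to add the ball boundary to the set. The goal is to ensure that two chosen sets always share a vertex or an edge. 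Small cases, which is why the statement requires $n\ge5$, would be checked separately if needed.
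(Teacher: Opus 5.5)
Your high-level plan (largest components left after deleting $2n-1$ vertices, neighbourhood bounds transferred from $\Gi$, radius-$(n/2-1)$ balls as the even-case obstruction) matches the paper's. However, the construction and the touching argument fail as stated, already for odd $n=2m+1$.

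\textbf{Odd $n$.} In $T_{2m+1}$ take $J=\{v\}\cup\{v+(\pm d,\pm d)\mid d\in[m]\}$, which has $4m+1=2n-1$ vertices. Write vertices as $v+(i,j)$ with $i,j\in[-m,m]$. Then $T_{2m+1}-J$ has exactly two components, $\{|i|<|j|\}$ and $\{|i|>|j|\}$, each with $2m^{2}$ vertices. Both are maximum components, so both lie in your family, and they do not touch. Your fallback options also fail. Enlarging members by their neighbourhoods makes $J$ itself a hitting set of size $2n-1$: every enlarged member has more than $2m^{2}$ vertices, so none fits inside a component of $T_{2m+1}-J$. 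Keeping only components larger than $2m^{2}$ leaves no member avoiding $J$. The same example refutes your size claims: no component exceeds $n^{2}/2$, and the complement of a largest component has $2m^{2}+4m+1$ vertices. So there is no slack for a crude counting argument.

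The paper's route is as follows. It chooses exactly one maximum component $H_J$ per $J$. It proves two tight facts: every $H_J$ has at least $2m^{2}$ vertices, and every connected set of exactly $2m^{2}$ vertices has at least $4m+1$ neighbours. For sets meeting almost all rows and columns, the second fact goes through the complement $V\setminus N[S]$ and a row and a column each containing at most one neighbour. Counting then gives touching unless $|X|=|Y|=2m^{2}$. In that tie, a non-touching pair forces $N(X)=N(Y)=V\setminus(X\cup Y)$, so both would be the component chosen for the same $J$. This is exactly where ``one component per $J$'' is indispensable.

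\textbf{Even $n$.} For $n=2m$, non-uniqueness of the largest component does not detect the obstruction. Let $M$ be an $m$-boundary set with balls $B,B'$. For $v\in B$, the set $M\cup\{v\}$ has the \emph{unique} maximum component $B'$. For $v'\in B'$, the set $M\cup\{v'\}$ has the unique maximum component $B$. But $B$ and $B'$ do not touch.

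The repair must abandon maximality. Fix one ball $B_M$ per $M$, and exclude every $J\supseteq M$ from the maximum-component rule. For $v\in B_M$, use the large component of $B_M\setminus\{v\}$ instead. This component may have as few as $2m^{2}-2m-1$ vertices and is not a maximum component of $T_{2m}-(M\cup\{v\})$. Adding $M$ to it would instead let $M\cup\{v\}$ hit it.

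You then still owe the two substantive steps you only announce:
\begin{enumerate}
\item any separator of at most $4m-2$ vertices leaving two components of at least $2m^{2}-2m$ vertices is an $m$-boundary set;
\item the truncated balls touch every other member.
\end{enumerate}
The second step uses that their neighbourhood is exactly $M\cup\{v\}$, that two distinct $(m-1)$-balls share at most $|B|-3$ vertices, and that $m\ge 3$.

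Finally, the hypothesis $n\ge 5$ is not about checking small cases. Since $\tw(T_{4})=6=2\cdot 4-2$, any even construction must genuinely fail for $m=2$.
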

\noindent
For the remaining small cases, it is known that $\tw(\torus{3}) = 5$ ($= 2 \cdot 3 -1$) and $\tw(\torus{4}) = 6$ ($= 2\cdot 4 -2$)~\cite{KiyomiOO16}.
Thus, the treewidth of $\Tn$ is now determined for all $n \ge 3$.

\subsection{Treewidth and bramble number}

The \emph{treewidth} of a graph $G$, denoted $\tw(G)$, represents how close $G$ is to a tree.
It is defined using so-called \emph{tree decompositions}, which are often used to design efficient algorithms.
Here, we omit this definition as we do not use tree decompositions directly (see Bodlaender's survey~\cite{Bodlaender98} for the tree-decomposition based definition). 
Instead, we introduce a dual characterization based on brambles.

Let $G = (V,E)$ be a graph.
Two vertex subsets $S, S' \subseteq V$ \emph{touch} if $S \cap S' \ne \emptyset$ or there is an edge $\{v,v'\} \in E$ with $v \in S$ and $v' \in S'$.
A family $\mathcal{S} \subseteq 2^{V}$ of vertex subsets of $G$ is a \emph{bramble} of $G$
if every set in $\mathcal{S}$ is connected and any two elements in $\mathcal{S}$ touch.
The \emph{order} of a bramble is the size of a minimum hitting set of the bramble (i.e., a minimum set intersecting every element of the bramble).
The \emph{bramble number} of $G$, denoted $\bn(G)$, is the maximum order of a bramble of $G$.

Seymour and Thomas~\cite{SeymourT93} showed that $\tw(G) = \bn(G)-1$ for every graph $G$.
Therefore, proving \cref{thm:tw} is equivalent to showing that $\bn(\Tn) = 2n$ for $n \ge 5$.
Since we already know that $\bn(\Tn) \le 2n$ (equivalently $\tw(\Tn) \le 2n-1$) for all $n \ge 3$ by~\cite{EllisW08},
our goal is to show the following lower bound:
\begin{equation}
\label{eq:bramble_lb}
\bn(\Tn) \ge 2n \quad \text{for} \quad n \ge 5. 
\end{equation}

Brambles provide a constructive way to prove lower bounds on treewidth.
Indeed, brambles were used in all previous results showing lower bounds on the treewidth of toroidal grids~\cite{Wood13,KiyomiOO16,AidunDMYY20}.
We also use brambles and prove \eqref{eq:bramble_lb}.
See \cref{sec:overview} for an overview of the proof.

\subsection{Overview of the proof}
\label{sec:overview}

In the previous studies~\cite{Wood13,KiyomiOO16,AidunDMYY20}, the constructed brambles of $\Tn$ had common properties:
each member of the brambles is formed by a constant number of rows and columns possibly with some missing vertices. 
Using the property that each element contains almost full rows or almost full columns, it is possible to show that the families are brambles.
Also, using \emph{sparsity} of the elements and the carefully chosen \emph{holes} (i.e., missing vertices),
it can be shown that they do not admit small hitting sets.

In this paper, we take a different approach for designing a bramble.
It is based on the following simple necessary condition of an order-$k$ bramble $\mathcal{B}$ of a graph $G = (V,E)$:
for every $S \subseteq V$ with $|S| < k$, there is a connected set $B \in \mathcal{B}$ such that $B \cap S = \emptyset$.
To satisfy this condition with $G = \Tn$ and $k = 2n$, we construct a family $\mathcal{F}$ of vertex subsets as follows.
For each $J \subseteq V(\Tn)$ with $|J| = 2n-1$, we take one connected component $C$ of $\Tn - J$ and put $V(C)$ into $\mathcal{F}$.
When selecting $C$, we choose a maximum component, based on the intuition that larger sets are more likely to touch.
Clearly, the constructed family contains connected sets only and does not admit a hitting set of size smaller than $2n$.
If any two sets in the family touch, then we have a desired bramble and we are done.
Depending on the parity of $n$, we have two different cases.
\begin{itemize}
  \setlength{\itemsep}{0pt}
  
  \item For odd $n$, we show that the constructed family is indeed a bramble:
  each set in the family is large and has large neighborhood,
  and consequently any two sets in the family touch.

  \item For even $n$, the constructed family contains pairs of non-touching sets, and thus, it is not a bramble.
  However, we can characterize such pairs and handle them separately in the construction.
  We then show that the modified construction gives a bramble for even $n$.
\end{itemize}
In both cases, we often need to prove that certain vertex sets have large neighborhoods.
To this end, we use some connections between toroidal grids and the infinite grid.
For the infinite grid, for each size~$s$, the size-$s$ sets achieving the minimum neighborhood size are well understood~\cite{WangW77,AltshulerYVWB06,Sieben08,GuptaLMS21}.
By carefully mapping vertex sets of a toroidal grid to those of the infinite grid, we show the desired lower bounds on the size of neighborhoods in toroidal grids.

\section{Preliminaries}
\label{sec:preliminaries}

For integers $l$ and $u$ with $l \le u$, we set $[l, u] = \{i \in \Z \mid l \le i \le u\}$.
For a positive integer~$n$, we set $[n] = [1,n]$.
We define the addition over $[n]^{2}$ as $(i,j) + (i',j') = (i+i', j+j')$,
where both $i+i'$ and $j+j'$ are computed modulo $n$, with representatives $[n]$.

We represent the vertex set of $\Tn$ by $V(\Tn) = [n]^{2}$ and the edge set by
$E(\Tn) = \{\{v, v + (1,0)\}, \{v, v + (0,1)\} \mid v \in [n]^{2}\}$.
Following the matrix-like convention, we call the vertex set $\{i\} \times [n]$ the \emph{$i$th row} and $[n] \times \{j\}$ the \emph{$j$th column}.
See \cref{fig:torus}.

\begin{figure}[tbh]
  \centering
  \includegraphics{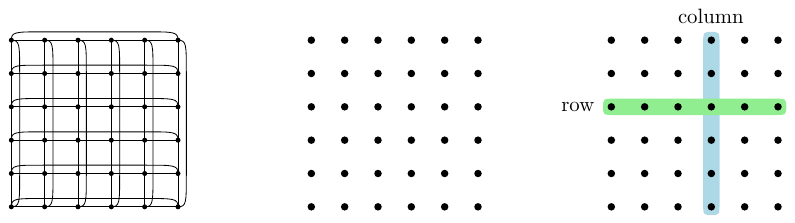}
  \caption{(Left) $\Tn$ with $n = 6$. (Center) We often draw $\Tn$ without edges. (Right) A row and a column.}
  \label{fig:torus}
\end{figure}

Let $G = (V,E)$ be a graph. 
For $S \subseteq V$, we denote by $G[S]$ the subgraph induced by $S$.
If $G[S]$ is connected, then we say that $S$ is \emph{connected}.
For $S \subseteq V$, let $G - S$ denote the graph $G[V \setminus S]$.
We call a connected component of $G$ with the maximum number of vertices a \emph{maximum component} of $G$. 
Note that a graph may have two or more maximum components.
For $v \in V$, let $N_{G}(v) = \{u \mid \{u,v\} \in E\}$ and $N_{G}[v] = \{v\} \cup N_{G}(v)$.
For $S \subseteq V$, let $N_{G}(S) = \left(\bigcup_{v \in S} N_{G}(v)\right) \setminus S$ and $N_{G}[S] = S \cup N_{G}(S)$.
By $\dist_{G}(u,v)$, we denote the \emph{distance} between $u$ and $v$ in $G$;
that is, $\dist_{G}(u,v)$ is the number of edges in a shortest $u$--$v$ path in $G$.

\section{Lower bounds on neighborhood size via infinite grids}
\label{sec:infinite}

In this section, we present lower bounds on the size $|\NTn(S)|$ for $S \subseteq V(\Tn)$ with certain conditions
in terms of the minimum size of the neighborhood of size-$|S|$ sets in the infinite grid.

By $\Gi$, we denote the \emph{infinite grid}; that is, $\Gi$ is the infinite graph such that the vertex set is $\Z^{2}$ and 
two vertices are adjacent in $\Gi$ if the $\ell_{1}$-distance between them is~$1$.
For a non-negative integer $s$, let $\bGi(s) = \min_{S \subseteq \Z^{2}, \, |S| = s} |N_{\Gi}(S)|$.

Wang and Wang~\cite{WangW77} presented a vertex ordering on $\Gi$ such that, for every~$s$,
the set $S$ of the first~$s$ elements in the ordering satisfies $\bGi(s) = |N_{\Gi}(S)|$.%
\footnote{Such an ordering is known also for $\Tn$ with even $n$~\cite{Karakhanyan82,Riordan98},
and actually, the ordering is the same as that for the infinite grid~\cite{BezrukovL09}.
Unfortunately, we cannot directly use these results because we deal with odd $n$ as well
and we need more properties than a single ordering as one can see in this section.}
Using this ordering,
Sieben~\cite[Theorem~5.3]{Sieben08} showed that $\bGi(s) \ge \lceil 2 + \sqrt{8s-4} \, \rceil$
and Altshuler et al.~\cite[Theorem~1]{AltshulerYVWB06} determined the exact value of $\bGi(s)$ for all $s$.%
\footnote{To be more precise, they studied the function $n(s) = \min_{s' \ge s} \bGi(s')$. We can see that $n(s) = \bGi(s)$ as $\bGi$ is non-decreasing.}
Gupta et al.~\cite{GuptaLMS21} characterized all vertex sets $S$ that satisfy $\bGi(|S|) = |N_{\Gi}(S)|$.
Their result showing that such $S$ is connected unless $|S| = 2$ is particularly useful for our proof.

In the proofs, we need tight lower bounds on $\bGi(s)$ for the following values of $s$.
\begin{lemma}
\label{lem:bGi_lower_bound}
For integers $s \ge 1$ and $m \ge 1$,
\[
  \bGi(s) \ge
  \begin{cases}
    4m    & s \ge 2m^{2}-2m, \\
    4m+1  & s \ge 2m^{2}-2m+2, \\
    4m+2  & s \ge 2m^{2}, \\
    4m+3  & s \ge 2m^{2}+1.
  \end{cases}
\]
\end{lemma}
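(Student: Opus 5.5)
Since $\bGi$ is non-decreasing (see the footnote above), it suffices to verify each of the four bounds at the smallest $s$ in its range. We will use Sieben's bound $\bGi(s) \ge \lceil 2 + \sqrt{8s-4}\,\rceil$, and fall back on the exact values of Altshuler et al.\ where Sieben's bound is not sharp enough. Concretely, it is enough to show the following four inequalities:
\begin{itemize}
\item $\bGi(2m^2-2m) \ge 4m$;
\item $\bGi(2m^2-2m+2) \ge 4m+1$;
\item $\bGi(2m^2) \ge 4m+2$;
\item $\bGi(2m^2+1) \ge 4m+3$.
\end{itemize}
For small $m$ the first value may be $s=0$ (when $m=1$); since the lemma assumes $s \ge 1$, we then check $s=1$ directly: $\bGi(1)=4 \ge 4$.

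Next we plug the thresholds into Sieben's bound. For $s = 2m^2$ we get $8s-4 = 16m^2-4 > (4m-1)^2 = 16m^2-8m+1$ whenever $8m > 5$. Hence $\sqrt{8s-4} > 4m-1$, so $\lceil 2+\sqrt{8s-4}\,\rceil \ge 4m+2$. For $s = 2m^2+1$ we get $8s-4 = 16m^2+4 > (4m)^2$, so the ceiling is at least $4m+3$. For $s=2m^2-2m+2$ we get $8s-4 = 16m^2-16m+12$, which we compare with $(4m-2)^2 = 16m^2-16m+4$. Since $12>4$, the square root exceeds $4m-2$ and the bound gives at least $4m+1$. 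Finally, for $s=2m^2-2m$ we get $8s-4 = 16m^2-16m-4$, which we compare with $(4m-3)^2 = 16m^2-24m+9$. The difference is $8m-13$, which is positive for $m\ge 2$, so the bound gives at least $4m$. The case $m=1$ is the trivial one handled above, with $s\ge 1$ giving $\bGi(s)\ge 4$.

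So all four cases should follow from Sieben's bound by elementary comparisons of $8s-4$ with a perfect square. The main risk is an off-by-one error in the ceiling (strict versus non-strict inequality when $8s-4$ is itself a perfect square). In each case above the inequality is strict, so the ceiling argument is safe. If some case turned out tight, we would instead read off the exact value from Altshuler et al.'s formula, or from the diamond (ball) sets in Wang and Wang's ordering, where balls of radius $r$ have $2r^2+2r+1$ vertices and $4r+4$ neighbors.
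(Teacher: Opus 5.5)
Your proof is correct, but it takes a different route from the paper's.

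The paper reads off exact values from Altshuler et al.: $\bGi(2m^{2})=4m+2$ and $\bGi(2m^{2}+1)\ge 4m+3$ for $m\ge 1$, and, for $m\ge 2$, $\bGi(2m^{2}-2m)=4m$ and $\bGi(2m^{2}-2m+2)=4m+1$. It extends these to larger $s$ via monotonicity of $\bGi$ (Gupta et al.). It then handles $m=1$ separately for the first two cases, because those exact values are only stated for $m\ge 2$.

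You derive all four bounds from the single closed-form bound $\bGi(s)\ge\lceil 2+\sqrt{8s-4}\,\rceil$ by comparing $8s-4$ with a perfect square. Your comparisons are correct and strict, namely $8m>5$, $16m^{2}+4>16m^{2}$, $12>4$, and $8m-13>0$ for $m\ge 2$. So the step from $2+\sqrt{8s-4}>k$ to a ceiling of at least $k+1$ is sound. Your computation for the second bound also already covers $m=1$, which leaves only the trivial check $\bGi(1)=4$.

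Your route is more uniform and needs one cited result instead of a table of exact values. You do not even need monotonicity of $\bGi$: the map $s\mapsto\lceil 2+\sqrt{8s-4}\,\rceil$ is itself non-decreasing, so applying Sieben's bound directly at any $s$ in a given range suffices.

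The paper's route additionally shows that the bounds are attained at the thresholds. The lemma does not need this.

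Your closing contingency paragraph is unnecessary, since none of the four cases is tight.
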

\begin{proof}
Altshuler et al.~\cite[Theorem~1]{AltshulerYVWB06} showed that:
for every $m \ge 1$, $\bGi(2m^{2}) = 4m+2$ and $\bGi(2m^{2}+1) \ge 4m+3$; and 
for every $m \ge 2$, $\bGi(2m^{2}-2m) = 4m$ and $\bGi(2m^{2}-2m+2) = 4m+1$.
Since $\bGi(s)$ is non-decreasing~\cite[Lemma 2.1]{GuptaLMS21},
it suffices to check the cases where $m = 1$ and $s \ge 2m^{2}-2m$ or $s \ge 2m^{2}-2m+2$.
First, $m = 1$ and $s \ge 2m^{2}-2m$ only implies that $s \ge 0$, which is subsumed by the condition $s \ge 1$.
Thus, $\bGi(s) \ge \bGi(1) = 4 = 4m$ holds.
Next, $m = 1$ and $s \ge 2m^{2}-2m+2$ implies that $s \ge 2$.
Thus, we have $\bGi(s) \ge \bGi(2) = 6 = 4m+2 > 4m + 1$.
\end{proof}

The next lemma allows us to handle the infinite grid and a toroidal grid in the same way when a vertex set is separated from the rest by two rows and two columns.

\begin{lemma}
\label{lem:consecutive-empty-rows-and-columns}
If there are two consecutive rows and two consecutive columns that do not intersect $S \subseteq V(\Tn)$, then 
\[
  |\NTn(S)| 
  \ge
  \begin{cases}
    \bGi(|S|) & \text{for any } S, \\
    \bGi(|S|) + 1 & \text{if } S \text{ is disconnected and } |S| \ge 3.
  \end{cases}
\]
\end{lemma}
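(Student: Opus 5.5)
The plan is to ``unroll'' the torus into the infinite grid along the two empty rows and two empty columns. This gives a bijection between $S$ with its torus neighborhood and a subset of $\Z^2$ with its $\Gi$-neighborhood. Then the first bound is immediate from the definition of $\bGi$, and the second follows from the characterization of extremal sets by Gupta et al.~\cite{GuptaLMS21}.

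\textbf{Setting up the map.} Suppose rows $r, r+1$ and columns $c, c+1$ (indices mod $n$) do not meet $S$. Relabel coordinates cyclically so that row $r+1$ becomes row $1$ and row $r$ becomes row $n$, and likewise column $c+1$ becomes column $1$ and column $c$ becomes column $n$. Define $\varphi \colon [n]^2 \to \Z^2$ sending each vertex to its relabeled coordinates, viewed as a point of the box $[1,n]^2 \subseteq \Z^2$. By assumption $\varphi(S) \subseteq [2,n-1]^2$.

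\textbf{Adjacency agrees near $S$.} Take $v \in S$, so $\varphi(v)$ has both coordinates in $[2,n-1]$. None of the four steps $\pm(1,0), \pm(0,1)$ from $v$ wraps around modulo $n$ in the relabeled coordinates. Hence $\varphi$ maps $N_{\Tn}(v)$ bijectively onto $N_{\Gi}(\varphi(v))$, and the latter lies inside the box $[1,n]^2$. Since $\varphi$ is injective on $[n]^2$, taking unions over $v \in S$ and removing $S$ gives $\varphi(\NTn(S)) = N_{\Gi}(\varphi(S))$. Therefore
\[
|\NTn(S)| = |N_{\Gi}(\varphi(S))| \ge \bGi(|S|).
\]
This is exactly where two consecutive empty rows and columns are needed. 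With a single empty row, that row would be adjacent to $S$ from both sides and would have to appear at both ends of the box. With two, one copy serves as the top boundary and the other as the bottom.

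\textbf{The disconnected case.} The same observation shows that, for $u, v \in S$, $u$ and $v$ are adjacent in $\Tn$ if and only if $\varphi(u)$ and $\varphi(v)$ are adjacent in $\Gi$. Hence $\Tn[S] \cong \Gi[\varphi(S)]$, so if $S$ is disconnected then so is $\varphi(S)$. By Gupta et al.~\cite{GuptaLMS21}, every set $X \subseteq \Z^2$ with $|N_{\Gi}(X)| = \bGi(|X|)$ is connected unless $|X| = 2$. Thus for disconnected $S$ with $|S| \ge 3$, $\varphi(S)$ is not extremal, and $|N_{\Gi}(\varphi(S))| \ge \bGi(|S|) + 1$, which transfers to $|\NTn(S)|$.

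I expect no real obstacle here. The only care needed is to verify that no wrap-around edge of the torus is incident to a vertex of $S$, which is exactly what the empty boundary rows and columns guarantee.
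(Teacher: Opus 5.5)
Your proposal is correct and follows the paper's proof: you cyclically shift so the empty rows and columns sit at the boundary, identify $S$ with a subset of the box $[n]^{2} \subseteq \Z^{2}$ whose $\Gi$-neighborhood coincides with $\NTn(S)$, and invoke Gupta et al.\ for the disconnected case. The one difference is that you map directly into $\Z^{2}$ rather than passing through the torus with wrap-around edges removed, and you make explicit that $\Tn[S] \cong \Gi[\varphi(S)]$, so disconnectedness transfers; the paper leaves that step implicit.
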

\begin{proof}
By symmetry, we may assume that the first row and the $n$th row do not intersect $S$. Let us call them $R_{1}$ and $R_{n}$.
Also, we may assume that the first column and the $n$th column do not intersect $S$, and we call them $C_{1}$ and $C_{n}$.

Let $\Tn^{-}$ be the $n \times n$ grid obtained from $\Tn$ by removing the wrap-around edges; 
i.e., the edges between $R_{1}$ and $R_{n}$ and the edges between $C_{1}$ and $C_{n}$.
Observe that $N_{\Tn^{-}}(S) = \NTn(S)$ since $S$ does not intersect $R_{1}$, $R_{n}$, $C_{1}$, and $C_{n}$.
Clearly, $\Tn^{-}$ is isomorphic to the induced subgraph $\Gi[[n]^{2}]$ of $\Gi$ 
with the identity mapping $(i,j) \mapsto (i,j)$ from $V(\Tn^{-})$ ($= [n]^{2}$) to $[n]^{2}$ as an isomorphism between them.
This implies that $N_{\Tn^{-}}(S) = \NGi(S)$, and thus it follows that
\[
  |\NTn(S)|
  =
  |N_{\Tn^{-}}(S)|
  =
  |\NGi(S)|
  \ge
  \begin{cases}
    \bGi(|S|) & \text{for any } S, \\
    \bGi(|S|) + 1 & \text{if } S \text{ is disconnected and } |S| \ge 3,
  \end{cases}
\]
where the disconnected case follows by the result of Gupta et al.~\cite[Theorem~G]{GuptaLMS21},
who showed that if a finite set $S \subseteq \Z^{2}$ with $|S| \ge 3$ is disconnected, then $|\NGi(S)| \ge \bGi(|S|) + 1$.
\end{proof}

In our proofs, we use \cref{lem:consecutive-empty-rows-and-columns} in the following forms.

\begin{lemma}
\label{lem:empty-row-and-column}
Let $S \subseteq V(\Tn)$ be a connected set. 
If there are two rows and two columns that do not intersect $S$, then $|\NTn(S)| \ge \bGi(|S|)$.
\end{lemma}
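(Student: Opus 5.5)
The plan is to reduce to \cref{lem:consecutive-empty-rows-and-columns}. The rows and columns missing $S$ need not be consecutive, so we first push $S$ into a position where two consecutive rows and two consecutive columns avoid it. We do this with a contraction that does not increase the neighborhood size. Alternatively, we can map $S$ directly into $\Gi$ by ``cutting'' the torus along the empty rows and columns.

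Let $R_a, R_b$ be two rows disjoint from $S$, with $a<b$. Since $S$ is connected and avoids both rows, $S$ lies entirely within one of the two cyclic arcs of rows strictly between $R_a$ and $R_b$. The same holds for the columns: $S$ lies within one cyclic arc of columns strictly between two empty columns $C_c, C_d$. After a cyclic shift, which is an automorphism of $\Tn$, we may assume the following:
\begin{itemize}
\item the row indices of $S$ lie in an interval $[p,q]$ with $1 < p \le q < n$, and row $p-1$ or row $q+1$ is empty;
\item more precisely, all rows outside $[p,q]$ contain no vertex of $S$;
\item the analogous statement holds for the columns, with interval $[p',q']$.
\end{itemize}
Here we use that the arc between the two empty rows avoids at least one of them. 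So $S \subseteq [p,q]\times[p',q']$ with $2\le p$, $q\le n-1$, $2 \le p'$ and $q' \le n-1$. In particular, row $1$ and column $1$ avoid $S$. We still need a second consecutive empty row, say row $n$, and this is where we argue carefully. The arc containing $S$ is strictly between $R_a$ and $R_b$. Shift so that $R_a$ becomes row $1$. Then $S$ lies either in rows $[2,b-1]$, in which case row $b$ is empty and so is row $1$, or in rows $[b+1,n]$. In the second case, reflect (also an automorphism) so that $S$ again lies in rows $2,\dots,b'-1$ with rows $1$ and $b'$ empty.

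The key step is then to embed $S$ into $\Gi$ via the identity map on $[n]^2$, as in the proof of \cref{lem:consecutive-empty-rows-and-columns}. We must show $|\NGi(S)| \le |\NTn(S)|$, even though rows $1$ and $n$ might not both be empty. Every $\Gi$-neighbor of $S$ lies in $[1,n]^2$, because $S \subseteq [2,n-1]^2$ after the arrangement above. Therefore each $\Gi$-neighbor $(i,j)$ of $s\in S$ is also a $\Tn$-neighbor via a non-wrap-around edge. It is not in $S$, so it lies in $\NTn(S)$. Hence $\NGi(S)\subseteq \NTn(S)$, and $|\NTn(S)| \ge |\NGi(S)| \ge \bGi(|S|)$.

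The only real obstacle is ensuring $S \subseteq [2,n-1]^2$, i.e.\ that $S$ avoids both row $1$ and row $n$ (and likewise for columns) after shifting. This follows by choosing the shift so that one empty row becomes row $1$ and $S$ sits in the arc starting at row $2$. That arc ends before the other empty row, which has index at most $n$. So $S$ avoids row $n$ unless the other empty row is itself row $n$, in which case $S$ avoids row $n$ trivially. Either way the rows of $S$ lie in $[2,n-1]$. The columns are handled independently in the same way, and connectivity of $S$ is used exactly to guarantee that $S$ lies in a single arc.
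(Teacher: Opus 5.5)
Your proof is correct and follows the paper's route. Connectivity forces $S$ into a single arc between the two empty rows (respectively, columns), so after an automorphism rows $1,n$ and columns $1,n$ avoid $S$. The identity embedding into $\Gi$ then gives $\NGi(S)\subseteq\NTn(S)$. That final step is exactly the content of \cref{lem:consecutive-empty-rows-and-columns}, which the paper simply cites rather than re-deriving.

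Your opening bullet list asserts $S\subseteq[2,n-1]^2$ before justifying it, and the ``contraction'' you mention is never used. Your final paragraph does supply the actual argument, so this is only a presentational issue.
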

\begin{proof}
By \cref{lem:consecutive-empty-rows-and-columns}, it suffices to show that there are 
consecutive rows and columns that do not intersect $S$.
We show this claim only for rows below. The proof for columns is symmetric.

Let $R_{1}$ and $R_{2}$ be two rows that do not intersect $S$.
By symmetry, we may assume that $R_{1}$ is the first row and $R_{2}$ is the $i_{2}$th row ($2 \le i_{2} \le n$).
Since $S$ is connected, it lies \emph{between} $R_{1}$ and $R_{2}$, and thus, one of the following holds:
\begin{itemize}
  \setlength{\itemsep}{0pt}
  \item $1 < i < i_{2}$ for all $(i,j) \in S$;
  \item $i_{2} < i \le n$ for all $(i,j) \in S$.
\end{itemize}
In the first case, $S$ does not intersect the $n$th row (and the first row),
while in the second case, $S$ does not intersect the second row (and the first row).
\end{proof}

\begin{lemma}
\label{lem:almost-empty-row-and-column}
Let $S \subseteq V(\Tn)$. 
If there are a row $R$ and a column $C$
such that $S \cap (R \cup C) = \emptyset$, $|\NTn(S) \cap R| \le 1$, and $|\NTn(S) \cap C| \le 1$, then
\[
  |\NTn(S)| 
  \ge
  \begin{cases}
    \bGi(|S|) - 2 & \text{for any } S, \\
    \bGi(|S|) - 1 & \text{if } S \text{ is disconnected and } |S| \ge 3.
  \end{cases}
\]
\end{lemma}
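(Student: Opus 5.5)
The plan is to reduce to \cref{lem:consecutive-empty-rows-and-columns} by enlarging $S$ so that it avoids two consecutive rows and two consecutive columns, while controlling how the neighborhood changes. By symmetry we may assume that $R$ is the first row and $C$ is the first column. Set $S' = S \cup (\NTn(S) \cap (R\cup C))$ is the wrong direction, since $S'$ would meet $R$. Instead we push $S$ away from $R$. The vertices of $\NTn(S)\cap R$ number at most one, say $r = (1,j)$, and such a vertex has a neighbor in $S$ in row $2$ or row $n$. The rows adjacent to $R$ are row $2$ and row $n$.

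The cleanest route is to cut the torus open along $R$ and $C$, rather than add vertices. Remove the wrap-around edges so that $R$ becomes the last row and $C$ the last column. Concretely, use the identification in which $R$ is row $n$ and $C$ is column $n$, so that $S\subseteq [n-1]^2$. Then embed $[n-1]^2$ into $\Z^2$ by the identity, and compare $\NTn(S)$ with $\NGi(S)$. Every $\Gi$-neighbor of $S$ outside $[n-1]^2$ lies in row $0$, row $n$, column $0$, or column $n$ of $\Z^2$. The vertices of $\Z^2$ in row $n$ or column $n$ (within the relevant range) correspond bijectively to torus vertices of $R$ or $C$ adjacent to $S$ from the "high" side. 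The vertices in row $0$ or column $0$ correspond to torus vertices of $R$ or $C$ adjacent to $S$ from the "low" side, which go through wrap-around edges. Neighbors inside $[n-1]^2$ coincide in both graphs.

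We account as follows. Every element of $\NGi(S)\setminus[n-1]^2$ maps, by reducing coordinates modulo $n$, to an element of $\NTn(S)\cap(R\cup C)$. Each torus vertex $(n,j)$ of $R$ has at most two preimages, $(0,j)$ and $(n,j)$, and similarly for $C$. Corners cannot occur, since a neighbor of $S\subseteq[n-1]^2$ differs from some point in only one coordinate. Hence
\[
|\NGi(S)| \le |\NTn(S)\setminus(R\cup C)| + 2|\NTn(S)\cap R| + 2|\NTn(S)\cap C| \le |\NTn(S)| + |\NTn(S)\cap R| + |\NTn(S)\cap C| \le |\NTn(S)| + 2 .
\]
Combining this with $|\NGi(S)|\ge \bGi(|S|)$ gives the first bound. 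For the second bound we use Gupta et al.~\cite[Theorem~G]{GuptaLMS21}, exactly as in \cref{lem:consecutive-empty-rows-and-columns}. Since $S\subseteq\Z^2$ is a finite set with $|S|\ge 3$, we have $|\NGi(S)|\ge\bGi(|S|)+1$, provided $S$ is disconnected \emph{in $\Gi$}.

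The main subtlety is exactly this connectivity transfer. If $S$ is disconnected in $\Tn$, it is also disconnected in $\Gi$ after the embedding, because the embedding only deletes edges. The edges of $\Gi[S]$ are edges of $\Tn[S]$, as both endpoints lie in $[n-1]^2$ and no wrap-around edge is used. Connectivity in $\Gi$ would therefore imply connectivity in $\Tn$. So the disconnected case follows directly and yields $|\NTn(S)|\ge \bGi(|S|)+1-2$. The other point requiring care is the double-counting bound: verify that each vertex of $\NTn(S)\cap R$ has at most two $\Gi$-preimages adjacent to $S$, and that these are the only discrepancies.
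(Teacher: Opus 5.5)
Your proof is correct and is essentially the paper's argument: both open the torus by duplicating $R$ and $C$ and charge at most $|\NTn(S)\cap R|+|\NTn(S)\cap C|\le 2$ for the duplicated neighbors. The paper does this by passing to $\torus{n+1}$ and invoking \cref{lem:consecutive-empty-rows-and-columns}, while you embed $[n-1]^{2}$ directly into $\Z^{2}$ and apply Gupta et al.\ yourself. One small point: the second inequality in your display silently uses that the corner $R\cap C$ is not in $\NTn(S)$, which holds because all four of its neighbors lie in $R\cup C$, and it is worth stating.
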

\begin{proof}
\newcommand{\Tnp}{\torus{n+1}}
\newcommand{\NTnp}{N_{\Tnp}}

By symmetry, we may assume that $R$ is the first row and $C$ is the first column.
Let us consider the $(n+1) \times (n+1)$ toroidal grid $\Tnp$.
Since $S \subseteq V(\Tn) = [n]^{2} \subseteq [n+1]^{2} = V(\Tnp)$, the neighborhood $\NTnp(S)$ of $S$ in $\Tnp$ is defined as well.
In $\Tnp$, $S$ does not intersect the first and $(n+1)$st rows and the first and $(n+1)$st columns.
Hence, by \cref{lem:consecutive-empty-rows-and-columns} applied to $\Tnp$ and $S$, it suffices to show that $|\NTn(S)| \ge |\NTnp(S)| - 2$.

Observe that $\NTnp(S) \cap [n]^{2} \subseteq \NTn(S)$
since if two vertices in $[n]^{2}$ are adjacent in $\Tnp$, then they are adjacent in $\Tn$ as well.
Thus, $\NTnp(S) \setminus \NTn(S) = \NTnp(S) \cap (R_{n+1} \cup C_{n+1})$, where $R_{n+1}$ and $C_{n+1}$ are the $(n+1)$st row and column of $\Tnp$, respectively.
To complete the proof, it suffices to show that $|\NTnp(S) \cap R_{n+1}| \le 1$ and $|\NTnp(S) \cap C_{n+1}| \le 1$.
We only prove the former since the latter is symmetric.

Since $S$ does not intersect the first row of $\Tnp$,
the $n$th row of $\Tnp$ contains $|\NTnp(S) \cap R_{n+1}|$ vertices of $S$.
In $\Tn$, these $|\NTnp(S) \cap R_{n+1}|$ vertices in the intersection of $S$ and the $n$th row are adjacent to the same number of vertices in the first row $R$.
Therefore, it follows that $|\NTnp(S) \cap R_{n+1}| \le |\NTn(S) \cap R| \le 1$.
\end{proof}

\section{Proof for odd $n$}
\label{sec:odd}

In this section, we prove \eqref{eq:bramble_lb} for odd $n$.
That is, we show that $\bn(\Todd) \ge 4m+2$ for $m \ge 1$.
Note that we also cover the case of $\torus{3}$ here.

For each subset $J \subseteq V(\Todd)$ of size $4m+1$, we set $H_{J}$ to be a maximum component of $\Todd - J$.
If there are two or more maximum components, then we pick an arbitrary one of them as $H_{J}$.
We define a family $\mathcal{B} \subseteq 2^{V(\Todd)}$ as follows:
\[
  \mathcal{B} = \{V(H_{J}) \mid J \subseteq V(\Todd), \, |J| = 4m+1\}.
\]

We show that $\mathcal{B}$ is a bramble of $\Todd$ with order at least $4m+2$ for $m \ge 1$.
From the definition, each element of $\mathcal{B}$ is connected.
Also, no set $J \subseteq V(\Todd)$ of size $4m+1$ intersects all elements of $\mathcal{B}$ since $V(H_{J}) \subseteq V(\Todd) \setminus J$.
Hence, it suffices to show that any two elements of $\mathcal{B}$ touch.
To this end, we need two lemmas that roughly state the following properties:
\begin{itemize}
  \setlength{\itemsep}{0pt}
  \item each element of $\mathcal{B}$ is large (\cref{lem:lower_bound_bramble_odd});
  \item even a smallest element of $\mathcal{B}$ has large closed neighborhood (\cref{lem:lower_bound_neighbor_odd}).
\end{itemize}
Using these lemmas, we show that any two elements of $\mathcal{B}$ touch (\cref{lem:odd_touch}), completing the proof.

\begin{lemma}
\label{lem:lower_bound_bramble_odd}
Each element of $\mathcal{B}$ has size at least $2m^{2}$.
\end{lemma}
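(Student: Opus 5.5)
The plan is to argue by contradiction via an isoperimetric bound. Fix $J \subseteq V(\Todd)$ with $|J| = 4m+1$. The number of vertices of $\Todd - J$ is $(2m+1)^{2} - (4m+1) = 4m^{2}$. So the claim amounts to: some component contains at least half of the remaining vertices. The idea is to find a component $H$ that contains almost all of some row $R$ and some column $C$. Then the union $W$ of all \emph{other} components is controlled by \cref{lem:almost-empty-row-and-column}, and \cref{lem:bGi_lower_bound} shows $W$ cannot be large unless it is itself a single large component.

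\textbf{Step 1: choosing the row and column.} Since $|J| = 4m+1 < 2(2m+1)$, some row $R$ contains at most one vertex of $J$, and similarly some column $C$ does. Then $R \setminus J$ is a cycle or a path on at least $2m$ vertices, so it is connected; the same holds for $C \setminus J$.
\begin{itemize}
  \item If the intersection vertex $x$ of $R$ and $C$ is not in $J$, then $(R \cup C) \setminus J$ is connected. Let $H$ be the component of $\Todd - J$ containing it.
  \item If $x \in J$, then $R \setminus J$ and $C \setminus J$ may lie in different components. In this case let $H$ be the component containing $R \setminus J$. The set $W$ defined below then avoids $R$ entirely, while $C \setminus J$ must be handled by hand. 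This is the messiest sub-case.
\end{itemize}

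\textbf{Step 2: the isoperimetric bound for $W$.} Set $W = V(\Todd) \setminus (J \cup V(H))$, the union of all components other than $H$. Then $\NTodd(W) \subseteq J$. Since $W$ has no neighbours in $H$, we get $\NTodd(W) \cap R \subseteq J \cap R$ and $\NTodd(W) \cap C \subseteq J \cap C$, each of size at most $1$. Also $W \cap (R \cup C) = \emptyset$. Hence \cref{lem:almost-empty-row-and-column} applies to $W$.

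Suppose $|H| < 2m^{2}$. Then $|W| > 4m^{2} - 2m^{2} = 2m^{2}$, so \cref{lem:bGi_lower_bound} gives $\bGi(|W|) \ge 4m+3$.
\begin{itemize}
  \item If $W$ is disconnected (and $|W| \ge 3$ holds trivially), \cref{lem:almost-empty-row-and-column} yields $|\NTodd(W)| \ge 4m+2 > |J|$, a contradiction.
  \item If $W$ is connected, then $W$ is itself a component of size greater than $2m^{2}$.
\end{itemize}
Either way a maximum component has size at least $2m^{2}$, and $H_{J}$ is a maximum component. For $m = 1$ the numbers are small, and I would double-check them directly (for instance, that $|W| \ge 3$ and the bounds of \cref{lem:bGi_lower_bound} hold there).

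\textbf{Main obstacle.} I expect the hardest part to be the degenerate case $x \in J$ in Step~1, where $R \setminus J$ and $C \setminus J$ may lie in different components. My first attempt is to pick $R$ and $C$ more cleverly. Counting shows that there are at least two rows, or two columns, with at most one $J$-vertex, which gives room to choose a pair whose intersection avoids $J$. If that fails, I would fall back on applying the argument with $C$ replaced by a column that is entirely free of $J$, if one exists. Failing that, I would apply \cref{lem:empty-row-and-column}-type reasoning directly to each small component.
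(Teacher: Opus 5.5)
\textbf{There is a genuine gap: the case $x \in J$ is never resolved.} Your argument for the case where $(R\cup C)\setminus J$ lies in one component is exactly the paper's Case~1, and it is fine. The remaining case is $x=R\cap C\in J$ with $R\setminus\{x\}$ and $C\setminus\{x\}$ in different components. You leave it open, and your suggested repairs do not work.

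\textbf{The counting claim is false.} Since $|J|=4m+1=2\cdot 2m+1$, it is possible that exactly one row and exactly one column contain at most one vertex of $J$. Take $J=\{x+(d,d),\,x+(d,-d)\mid d\in[-m,m]\}$, the two full diagonals through $x$. Because $2m+1$ is odd they meet only at $x$, so $|J|=4m+1$. The row and the column through $x$ each meet $J$ only in $x$. Every other row and every other column contains exactly two vertices of $J$, and no row or column avoids $J$. So neither the ``clever choice'' of $R,C$ nor the ``$J$-free column'' fallback exists, and this case really occurs.

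\textbf{The isoperimetric route has no room here.} In this example $\Todd-J$ has exactly two components, each of size exactly $2m^{2}$, so the lemma is tight. In general, your $W$ contains $C\setminus\{x\}$, so \cref{lem:almost-empty-row-and-column} does not apply to it. Your last fallback, treating each small component by \cref{lem:empty-row-and-column}-type reasoning, is not an argument: what you must rule out is that components other than those of $R\setminus\{x\}$ and $C\setminus\{x\}$ exist at all.

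\textbf{The missing idea is a separation argument.} Here $J\setminus\{x\}$ has exactly $4m$ vertices, avoids $R\cup C$, and must separate $R\setminus\{x\}$ from $C\setminus\{x\}$. There are $4m$ internally vertex-disjoint $(R\setminus\{x\})$--$(C\setminus\{x\})$ paths, $m$ per quadrant. In one quadrant they are $x+(0,i),x+(1,i),\dots,x+(i,i),x+(i,i-1),\dots,x+(i,0)$ for $i\in[m]$. Their interiors cover $V(\Todd)\setminus(R\cup C)$. Hence each path contains exactly one vertex of $J$. Every vertex outside $J$ is therefore joined, along its path, to $R\setminus\{x\}$ or to $C\setminus\{x\}$. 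So $\Todd-J$ has exactly two components, which partition the $4m^{2}$ remaining vertices, and one of them has at least $2m^{2}$ vertices.
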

\begin{proof}
Let $J \subseteq V(\Todd)$ be a set with $|J| = 4m+1$.
It suffices to show that $\Todd - J$ contains a connected component with at least $2m^{2}$ vertices.
Observe that for each connected component $H$ of $\Todd - J$, $\NTodd(V(H)) \subseteq J$ holds.
Let $R$ and $C$ be a row and a column such that $|R \cap J| \le 1$ and $|C \cap J| \le 1$.
Such $R$ and $C$ exist since $|J| = 4m+1 < 2 \cdot (2m+1)$.

\proofsubparagraph{Case 1: There is a connected component of $\Todd - J$ that contains $(R \cup C) \setminus J$}
In this case, there is a connected component $H$ of $\Todd - J$ such that $(R \cup C) \setminus J \subseteq V(H)$.
Assume that $|V(H)| < 2m^{2}$ since otherwise we are done.
Let $Y = V(\Todd) \setminus (V(H) \cup J)$. Note that  $\NTodd(Y) \subseteq J$. 
We can see that $|Y| \ge 2m^{2} + 1$ as follows:
\[
  |Y| = |V(\Todd)| - |V(H)| - |J| \ge (2m+1)^{2} - (2m^{2}-1) - (4m+1) = 2m^{2} + 1.
\]
We assume that $Y$ is not connected, since otherwise we are done.
Observe that $(R\cup C) \cap Y = \emptyset$ since $R \cup C \subseteq V(H) \cup J = V(\Todd) \setminus Y$.
Observe also that $|\NTodd(Y) \cap R| \le |J \cap R| \le 1$ and $|\NTodd(Y) \cap C| \le |J \cap C| \le 1$.
Thus, \cref{lem:almost-empty-row-and-column} implies that $|\NTodd(Y)| \ge \bGi(|Y|) - 1$
as $Y$ is disconnected and $|Y| \ge 2m^{2}+1 \ge 3$.
Since $\bGi(|Y|) \ge 4m+3$ by \cref{lem:bGi_lower_bound} with $|Y| \ge 2m^{2}+1$,
it follows that $|\NTodd(Y)| \ge (4m+3) - 1 \ge 4m+2 > |J|$, contradicting $\NTodd(Y) \subseteq J$.

\proofsubparagraph{Case 2: There is no connected component of $\Todd - J$ that contains $(R \cup C) \setminus J$}
In this case, the unique vertex $v \in R \cap C$ has to belong to $J$.
Indeed, since $|R \cap J|, |C \cap J| \le 1$, we have $(R \cup C) \cap J = \{v\}$.
Observe that both $R\setminus \{v\}$ and $C \setminus \{v\}$ are connected sets in $\Todd - J$.
Thus, $J \setminus \{v\}$ has to separate $R\setminus \{v\}$ and $C \setminus \{v\}$.

Now we show that there are $4m$ internally vertex-disjoint $(R\setminus \{v\})$--$(C \setminus \{v\})$ paths that cover all vertices of $\Todd - (R \cup C)$.
To see this, consider a drawing of $\Todd$ such that $v$ is placed at the center (see \cref{fig:disjoint_paths}).
There are $m$ paths between $R \setminus \{v\}$ and $C \setminus \{v\}$ passing through the bottom-right $m \times m$ region:
for each $i \in [m]$, there is a shortest path $P_{i}$ from $v + (0,i)$ to $v + (i,0)$ via $v + (i,i)$;
that is, $P_{i} = (v + (0,i), v + (1,i), \dots, v+ (i,i), v+ (i,i-1), \dots, v + (i,0))$.
(Recall that we follow the matrix-like indexing for vertices.)
These $m$ paths are internally vertex-disjoint and pass through all vertices in the bottom-right region.
Since each of the other three regions contains $m$ such paths as well, in total, 
we have $4m$ internally vertex-disjoint $(R \setminus \{v\})$--$(C \setminus \{v\})$ paths that cover $V(\Todd) \setminus (R \cup C)$.
Let $\mathcal{P}$ be the set of these paths.

Since $|J \setminus \{v\}| = 4m$, each path in $\mathcal{P}$ contains exactly one vertex in $J \setminus \{v\}$.
This implies that $\Todd - J$ consists of exactly two connected components: one containing $R \setminus \{v\}$ and the other containing $C \setminus \{v\}$.
Since $|V(\Todd) \setminus J| = (2m+1)^{2} - (4m + 1) = 4m^{2}$, at least one of these components has $2m^{2}$ or more vertices.
\end{proof}
\begin{figure}[tbh]
  \centering
  \includegraphics{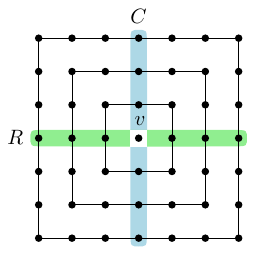}
  \caption{There are $4m$ internally vertex-disjoint paths between $R \setminus \{v\}$ and $C \setminus \{v\}$.}
  \label{fig:disjoint_paths}
\end{figure}

\begin{lemma}
\label{lem:lower_bound_neighbor_odd}
If $S \subseteq V(\Todd)$ is connected and $|S| = 2m^{2}$, then $|\NTodd(S)|\ge 4m+1$.
\end{lemma}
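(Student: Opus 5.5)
The plan is to split according to how many rows and columns of $\Todd$ avoid $S$. If two rows and two columns avoid $S$, then \cref{lem:empty-row-and-column} applies because $S$ is connected. Together with \cref{lem:bGi_lower_bound} for $s = 2m^{2}$, this gives $|\NTodd(S)| \ge \bGi(2m^{2}) \ge 4m+2 > 4m+1$, and we are done. By the symmetry between rows and columns, it therefore remains to handle the case where at most one row avoids $S$. Then $S$ meets at least $2m$ of the $2m+1$ rows.

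In the remaining case I would count $\NTodd(S)$ line by line, using that each row and each column of $\Todd$ induces a cycle of length $2m+1$. The key local facts about a row $R$ with $\emptyset \ne S \cap R \ne R$ are:
\begin{itemize}
  \item $R$ contains at least one vertex of $\NTodd(S)$, namely a cycle-neighbour of $S \cap R$ inside $R$;
  \item $R$ in fact contains at least two such vertices unless $|R \setminus S| = 1$;
  \item more precisely, $|\NTodd(S) \cap R|$ is at least the number of maximal runs of $S \cap R$ along the cycle, and in the generic case at least twice that number.
\end{itemize}
Since the rows partition the vertex set, the contributions of different rows are disjoint. Summing over rows gives $|\NTodd(S)| \ge 2a - k$, where $a$ is the number of rows meeting $S$ but not contained in it, and $k$ is the number of rows with exactly $2m$ vertices of $S$. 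Since $k \cdot 2m \le |S| = 2m^{2}$, we get $k \le m$. I would then add the analogous column count. A vertex of $\NTodd(S)$ is counted at most once among rows and once among columns, which yields an inequality on $2|\NTodd(S)|$.

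Full rows are treated separately. If $S$ contains a whole row, then every column meets $S$. In that case one can work with columns instead, or observe directly that $S$ contains a full row together with a connected piece of size $2m^{2}-(2m+1)$ attached to it. Each column then contributes at least one neighbour unless it is entirely contained in $S$. Since $|S| = 2m^{2} < (2m+1)^{2}$, only few columns can be full, and the count should reach $4m+1$.

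The main obstacle is the configuration where $S$ meets essentially every row and every column, many of them in $2m$ vertices, so that the naive row/column counting only gives about $3m+2$. In this regime the complement $Y = V(\Todd) \setminus S$ is large: $|Y| = 2m^{2}+4m+1$. Moreover $Y$ misses few vertices per row, and $\NTodd(S) \supseteq$ the vertices of $Y$ adjacent to $S$. Here I would first try to find a row $R$ and a column $C$ of $\Todd$ with $|R \cap S| \le 1$ and $|C \cap S| \le 1$ (swapping the roles of $S$ and its complement). I would then apply \cref{lem:almost-empty-row-and-column} to a suitable subset of $S$ that avoids $R \cup C$, using $\bGi(2m^{2}) = 4m+2$ and losing at most $1$ or $2$. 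If that loss is too large, the fallback is to use the exact structure of the near-full lines: a line with exactly $2m$ vertices of $S$ forces its single missing vertex into $\NTodd(S)$. Hence such lines pin down a set of $\NTodd(S)$-vertices forming a near-"diagonal" pattern, whose size can be counted directly against the at most $m$ such rows and $m$ such columns.
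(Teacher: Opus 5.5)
Your first reduction (two rows and two columns avoiding $S$ give $|\NTodd(S)|\ge\bGi(2m^{2})\ge 4m+2$) and your per-line count are correct. However, the case you yourself flag as the main obstacle is never proved, and the tool you suggest for it cannot close it as described.

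Concretely, the line count already gives $4m+1$ unless there are both a row $R$ and a column $C$ that meet $S$ but each contain at most one vertex of $\NTodd(S)$, i.e.\ $|R\setminus S|\le 1$ and $|C\setminus S|\le 1$, full lines included. If no such row exists, the at least $2m$ rows meeting $S$ contribute two neighbours each, plus one in the empty row when only $2m$ rows meet $S$. If such a row exists, $S$ meets at least $2m$ columns, so the same count works for columns unless such a column exists too.

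In that remaining case, applying \cref{lem:almost-empty-row-and-column} to $S$ or to a subset of $S$ does not work. Since $S$ is connected, the lemma loses $2$, and $\bGi(2m^{2})=4m+2$ yields only $4m$. Passing to a proper subset lowers the $\bGi$ value further and destroys the hypothesis that $R$ and $C$ each contain at most one neighbour. The ``diagonal pattern'' fallback is only a hope. The full-row paragraph is not an argument either: at most $m-1$ columns can be full, so one neighbour per non-full column guarantees only $m+2$.

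The missing idea is to apply \cref{lem:almost-empty-row-and-column} on the far side of $\NTodd(S)$. Suppose $|\NTodd(S)|\le 4m$ and let $X=V(\Todd)\setminus\NTodd[S]$. Then $\NTodd(X)\subseteq\NTodd(S)$ and $|X|\ge(2m+1)^{2}-2m^{2}-4m=2m^{2}+1$, so $\bGi(|X|)\ge 4m+3$ by \cref{lem:bGi_lower_bound}. This one extra unit over $\bGi(2m^{2})$ is exactly what pays for the loss of $2$.

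The bad row satisfies $R\subseteq\NTodd[S]$: otherwise the maximal arc of $R\setminus S$ through a vertex outside $\NTodd[S]$ would have two distinct endpoints in $\NTodd(S)$. Likewise $C\subseteq\NTodd[S]$. Hence $X$ avoids $R\cup C$, and $\NTodd(X)$ meets each of them in at most one vertex. The lemma now gives $|\NTodd(X)|\ge 4m+1$, contradicting $\NTodd(X)\subseteq\NTodd(S)$. This treats full and nearly full lines uniformly, so no separate full-row or diagonal analysis is needed.

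Your parenthetical about swapping $S$ and its complement points in this direction. But the set to feed into the lemma is $V(\Todd)\setminus\NTodd[S]$, and it is the contradiction hypothesis $|\NTodd(S)|\le 4m$ that makes this set large enough.
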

\begin{proof}
Let $r$ and $c$ be the numbers of rows and columns of $\Todd$ that $S$ intersects.
If $r \le 2m-1$ and $c \le 2m-1$, then \cref{lem:bGi_lower_bound,lem:empty-row-and-column} with $|S| = 2m^{2}$
imply that $|\NTodd(S)| \ge \bGi(|S|) \ge 4m+2$.

By symmetry, we may assume that $r \ge 2m$.
We first consider the case of $c \le 2m-1$. In this case, each row intersecting $S$ contains at least two elements of $\NTodd(S)$.
If $r = 2m+1$, then $|\NTodd(S)| \ge 2r = 4m+2$.
If $r = 2m$, then there is a unique row that does not intersect $S$, which contains an element of $\NTodd(S)$, and thus $|\NTodd(S)| \ge 2r+1 = 4m+1$.

In the remaining case, we have $r \ge 2m$ and $c \ge 2m$.
If each row intersecting $S$ contains at least two elements of $\NTodd(S)$,
then we can show that $|\NTodd(S)| \ge 4m+1$ as in the previous case.
Hence, we assume that there is a row $R$ that intersects $S$ and contains at most one element of $\NTodd(S)$.
Note that $R \subseteq \NTodd[S]$ since otherwise $R$ contains two or more elements of $\NTodd(S)$.
Similarly, we may assume that there is a column $C$ that intersects $S$ and contains at most one element of $\NTodd(S)$.
Again, we can see that $C \subseteq \NTodd[S]$.

Now, suppose to the contrary that $|\NTodd(S)|\le 4m$.
Let $X = V(\Todd) \setminus \NTodd[S]$. Note that $\NTodd(X) \subseteq \NTodd(S)$.
Since $|\NTodd(S)| \le 4m$,  it follows that $|X| = |V(\Todd) \setminus \NTodd[S]| = |V(\Todd)| - |S| - |\NTodd(S)| \ge (2m+1)^{2} - 2m^{2} - 4m = 2m^{2} + 1$.
By \cref{lem:bGi_lower_bound} with $|X| \ge 2m^{2}+1$, it follows that $\bGi(|X|) \ge 4m+3$.
Since $X$ does not intersect $R \cup C$ ($\subseteq \NTodd[S]$) and each of $R$ and $C$ contains at most one element of $\NTodd(X)$ ($\subseteq \NTodd(S)$),
we can apply \cref{lem:almost-empty-row-and-column} to $X$ and obtain $|\NTodd(X)| \ge \bGi(|X|) - 2 \ge 4m+1$.
This contradicts the assumption $|\NTodd(S)| \le 4m$ as $\NTodd(X) \subseteq \NTodd(S)$.
\end{proof}

\begin{lemma}
\label{lem:odd_touch}
Any two elements of $\mathcal{B}$ touch.
\end{lemma}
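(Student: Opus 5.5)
Let $A, B \in \mathcal{B}$, say $A = V(H_{J})$ and $B = V(H_{J'})$, and suppose for a contradiction that they do not touch. Then $B$ is disjoint from $\NTodd[A]$. The plan is to count vertices: show that $\NTodd[A]$ and $B$ together need more than $(2m+1)^{2} = 4m^{2}+4m+1$ vertices.

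By \cref{lem:lower_bound_bramble_odd}, $|A| \ge 2m^{2}$ and $|B| \ge 2m^{2}$. The task is therefore to show $|\NTodd[A]| \ge 2m^{2} + 4m + 1$ for every connected $A$ with $|A| \ge 2m^{2}$. Granting this, we get
\[
|\NTodd[A]| + |B| \ge 4m^{2} + 4m + 1 = |V(\Todd)|.
\]
This is not yet a strict excess, so one more vertex is needed.

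To gain it, I would use that $\NTodd(A) \subseteq J$ with $|J| = 4m+1$, and similarly $\NTodd(B) \subseteq J'$. In the equality case, $\NTodd[A]$ and $B$ partition $V(\Todd)$. Then $\NTodd(B)$ is contained in $\NTodd(A)$, and since $B \cap \NTodd[A] = \emptyset$, $B$ is a union of components of $\Todd - \NTodd[A]$. If some vertex of $\NTodd(A)$ has no neighbour in $B$, or the sizes are not all extremal, we already get a contradiction. Otherwise $|A| = |B| = 2m^{2}$ and $|\NTodd(A)| = 4m+1$. In that case I would argue directly that $H_{J'}$ was not a maximum component. Alternatively, one can note that $A \cup \NTodd(A)$ minus a vertex gives $J'$-structure forcing a larger component. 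Concretely, if $|\NTodd(A)| = 4m+1$ and $B = V(\Todd)\setminus \NTodd[A]$, then $J' \supseteq \NTodd(B)$ and $A$ lies in $\Todd - J'$ unless $J'$ meets it. Comparing sizes via maximality of $H_{J'}$ forces $|B| \ge |A|$, and symmetrically $|A| \ge |B|$. I expect this tie-breaking step to need care. A cleaner route may be to strengthen the neighborhood bound so that it is strict whenever the complement is also large.

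For the bound $|\NTodd[A]| \ge |A| + 4m+1$ when $|A| > 2m^{2}$, I would not reduce to size exactly $2m^{2}$, since \cref{lem:lower_bound_neighbor_odd} is stated only for $|S| = 2m^{2}$. Instead I would rerun its proof with $|S| \ge 2m^{2}$. The row/column counting part is size-independent. In the final part, suppose $|\NTodd(S)| \le 4m$ and set $X = V\setminus \NTodd[S]$. If $|X| \ge 2m^{2}+1$, \cref{lem:almost-empty-row-and-column} again gives a contradiction. If $|X|$ is smaller, I would pass to the complement: the total is $|\NTodd[A]| = (2m+1)^{2} - |X|$, and then it suffices that $|X| \le 2m^{2}-1$ when $B \subseteq X$. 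But $B \subseteq X$ gives $|X| \ge 2m^{2}$, so the only problematic case is $|X|$ between $2m^{2}$ and $2m^{2}$, which is exactly the tight case above.

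The main obstacle is this tight configuration, where $|A| = |B| = 2m^{2}$ and both neighborhoods have size $4m+1$. I would try first to exclude it using the disconnected case of \cref{lem:almost-empty-row-and-column} and \cref{lem:bGi_lower_bound} with $s = 2m^{2}$, where $\bGi = 4m+2$. The idea is to find a row and a column avoiding $A$ and containing at most one vertex of $\NTodd(A)$, and then show $|\NTodd(A)| \ge 4m+2$ unless the configuration is a rigid diamond shape. For the diamond shape I would check by hand that its complement has size $2m^{2}$ and touches it.
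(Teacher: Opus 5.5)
Your handling of the non-tight situation is essentially the paper's. Once $|\NTodd[A]|\ge 2m^{2}+4m+1$ holds for both sets, any size above $2m^{2}$ gives a strict excess. That leaves only $|A|=|B|=2m^{2}$ with $|\NTodd(A)|=4m+1$ and $B=V(\Todd)\setminus\NTodd[A]$. Note that your stated target ``$|\NTodd[A]|\ge|A|+4m+1$ for $|A|>2m^{2}$'' is false in general, e.g.\ for $A=V(\Todd)\setminus\{w\}$. What you need, and what your complement step can give, is $|\NTodd[A]|\ge 2m^{2}+4m+1$. This follows most simply from the reduction you rejected: $A$ contains a connected $Z$ with $|Z|=2m^{2}$, \cref{lem:lower_bound_neighbor_odd} applies to $Z$, and $\NTodd[Z]\subseteq\NTodd[A]$.

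The genuine gap is the tight case, which is the heart of the lemma, and none of your suggestions closes it. Maximality of $H_{J'}$ gives nothing here, since $A$ and $B$ have the same size and are both maximum components. Nor can the configuration be excluded geometrically. For $J=\{v\}\cup\{v+(\pm d,\pm d)\mid d\in[m]\}$, the graph $\Todd-J$ has exactly two components, each of size $2m^{2}$ with neighborhood exactly $J$, and they do not touch. Also, checking ``by hand'' that $V(\Todd)\setminus\NTodd[A]$ touches $A$ is impossible by definition.

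The missing idea is that the tie is broken by the construction of $\mathcal{B}$, not by geometry:
\begin{enumerate}
\item From $\NTodd(A)\subseteq J$ and $|\NTodd(A)|=4m+1=|J|$ you get $J=\NTodd(A)$.
\item Since $A$ and $B$ do not touch, $\NTodd(B)\subseteq V(\Todd)\setminus(A\cup B)=\NTodd(A)$.
\item \cref{lem:lower_bound_neighbor_odd} gives $|\NTodd(B)|\ge 4m+1$, which forces $\NTodd(B)=\NTodd(A)$, and hence $J'=\NTodd(B)=J$.
\item So $A$ and $B$ are both components of $\Todd-J$. Since $\mathcal{B}$ contains only the single set $V(H_{J})$ for this $J$, we get $A=B$, contradicting that they do not touch.
\end{enumerate}
The example above shows that this one-component-per-$J$ rule cannot be dispensed with.
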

\begin{proof}
Let $X, Y \in \mathcal{B}$ with $X \ne Y$ and $|X| \ge |Y|$.
By \cref{lem:lower_bound_bramble_odd}, we have $|X|, |Y| \ge 2m^{2}$.

First assume that $|X| > 2m^{2}$.
Let $Z$ be a connected subset of $Y$ with $|Z| = 2m^{2}$.
\cref{lem:lower_bound_neighbor_odd} implies $|\NTodd(Z)| \ge 4m+1$.
Now we have
\[
  |X| + |\NTodd[Z]| 
  = |X| + |Z| + |\NTodd(Z)| 
  \ge 2m^{2} +1 + 2m^{2} + 4m+1  
  >
  |V(\Todd)|,
\]
which implies that $X$ intersects $\NTodd[Z]$.
Since $\NTodd[Z] \subseteq \NTodd[Y]$, $X$ touches $Y$.

Next assume that $|X| = 2m^{2}$ (and thus $|Y| = 2m^{2}$ as well).
Suppose to the contrary that $X$ and $Y$ do not touch.
Let $J = V(\Todd)\setminus (X\cup Y)$.
We have $|J| = 4m+1$ as $|V(\Todd)| = (2m+1)^{2}$.
Since $X$ and $Y$ do not touch, $\NTodd(X)$ is a subset of $J$.
Since $|X| = 2m^{2}$, \cref{lem:lower_bound_neighbor_odd} implies that $|\NTodd(X)| \ge 4m+1=|J|$.
Hence, $\NTodd(X) = J$. 
In the same way, it follows that $\NTodd(Y) = J$.
Since $|J| + |X| + |Y| = |V(\Todd)|$, the connected components of $\Todd - J$ are $\Todd[X]$ and $\Todd[Y]$.
On the other hand, since $J = \NTodd(X)$, the graph $\Todd[X]$ cannot be a connected component of $\Todd - J'$ for any other set $J' \ne J$ with $|J'| = 4m+1$.
The same holds for $\Todd[Y]$.
This contradicts the construction of $\mathcal{B}$, where we pick only one connected component of $\Todd - J$.
\end{proof}

\section{Proof for even $n$}
\label{sec:even}

In this section, we prove \eqref{eq:bramble_lb} for even $n$.
That is, we show that $\bn(\Teven) \ge 4m$ for $m \ge 3$.
Note that only \cref{lem:m-ball-properties,lem:even-bramble-touch} require $m \ge 3$ and the others hold for $m \ge 2$.

We first observe that the bramble construction for the odd case does not work for the even case.
That is, if we take the vertex set of a maximum component of $\Teven - J$ for each set $J$ of size $4m-1$,
then the resulting family contains non-touching sets.
\cref{fig:even_ce} shows such an example for $2m=6$.
In \cref{fig:even_ce}~(left) and (center), the crossed vertices form a set $J$ of $4m-1$ ($=11$) vertices
and the black vertices form the unique maximum component of $\Teven - J$.
\cref{fig:even_ce}~(right) shows that these maximum components do not touch.
We can see that this happens because of the set of the $4m-2$ ($=10$) white vertices in \cref{fig:even_ce}~(right)
that leaves two isomorphic $\ell_{1}$-balls of radius $m-1$ ($=2$) after its removal.
By removing one more vertex from one of the balls, we can make the other ball the unique maximum component.

\begin{figure}[tbh]
  \centering
  \includegraphics{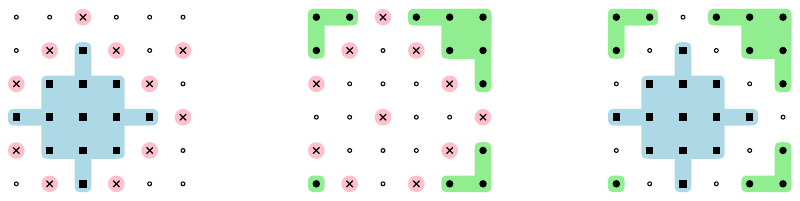}
  \caption{For $\Teven$, removing sets of $4m-1$ vertices may leave non-touching maximum components.}
  \label{fig:even_ce}
\end{figure}

In the following, we show that the example above essentially captures the \emph{unique} bad case.
This allows us to handle the exceptional case separately and define a bramble still similar to the odd case.

\subsection{The $m$-boundary sets and the $(m-1)$-balls}
\label{sec:even-m-boundary}
For a vertex $c \in V(\Teven)$, 
the \emph{$(m-1)$-ball centered at $c$} is the set $\{v \mid \dist_{\Teven}(c,v) \le m -1\}$ and
the \emph{$m$-boundary} of $c$ is the set $\{v \mid \dist_{\Teven}(c,v) = m\}$.
Note that if $B$ is the $(m-1)$-ball centered at $c$ and $M$ is the $m$-boundary of $c$, then $\NTeven(B) = M$.
If $M \subseteq V(\Teven)$ is the $m$-boundary of $c$ for some $c \in V(\Teven)$, we call $M$ an \emph{$m$-boundary set}.
Similarly, if $B \subseteq V(\Teven)$ is the $(m-1)$-ball centered at $c$ for some $c \in V(\Teven)$, we simply call $B$ an \emph{$(m-1)$-ball}.

We first count the number of vertices in an $(m-1)$-ball.

\begin{lemma}
\label{lem:m-1-ball-size}
An $(m-1)$-ball has size $2m^{2} - 2m + 1$.
\end{lemma}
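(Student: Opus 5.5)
By the vertex-transitivity of $\Teven$, it suffices to count the vertices at distance at most $m-1$ from a fixed center $c$. The plan is to express distances in $\Teven$ through cyclic coordinate differences and then reduce to counting lattice points in an $\ell_1$-ball of $\Z^2$.

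First, record the distance formula. For $c=(c_1,c_2)$ and $v=(v_1,v_2)$ we have
\[
\dist_{\Teven}(c,v)=d(c_1,v_1)+d(c_2,v_2),
\]
where $d(a,b)=\min(|a-b|,\,2m-|a-b|)$ is the distance in the cycle $C_{2m}$. This holds because $\Teven$ is the Cartesian product $C_{2m}\cp C_{2m}$, and distances add over Cartesian products.

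Next, define a map sending $v$ to the pair of signed offsets $(i,j)\in[-(m-1),\,m]^2$, where $v=c+(i,j)$ computed modulo $2m$. Each coordinate offset takes exactly one value in this window of $2m$ consecutive integers, so the map is a bijection between $V(\Teven)$ and that window. Under this map, $d(c_1,v_1)=|i|$ for every offset $i$ in the window; the only special case is $i=m$, where the cycle distance is $m=|i|$. Hence
\[
\dist_{\Teven}(c,v)=|i|+|j|.
\]

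Now restrict to the ball itself. If $|i|+|j|\le m-1$, then both $|i|\le m-1$ and $|j|\le m-1$, so every point of the $\ell_1$-ball of radius $m-1$ in $\Z^2$ lies inside the window. Therefore the $(m-1)$-ball corresponds bijectively to
\[
\{(i,j)\in\Z^2 : |i|+|j|\le m-1\}.
\]
It is precisely this inclusion in the window that the even side length $2m$ guarantees: no wrap-around identifications occur within the ball.

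Finally, count these lattice points. For each $i$ with $|i|=k\le m-1$, the coordinate $j$ ranges over $2(m-1-k)+1$ values. Summing over $i\in[-(m-1),m-1]$ gives
\[
(2m-1)+2\sum_{k=1}^{m-1}\bigl(2(m-1-k)+1\bigr)=(2m-1)+2(m-1)^2 .
\]
This simplifies to $2m^2-2m+1$, as claimed.

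The only point needing care is the bijectivity of the offset parametrization together with the fact that the ball stays inside the window; both are immediate once the window is chosen as $[-(m-1),m]^2$.
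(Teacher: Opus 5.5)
Your proof is correct and is essentially the paper's argument: both use the fact that the radius $m-1$ is less than $m$, so no wrap-around identifications occur and the ball is in bijection with the lattice $\ell_1$-ball $\{(i,j)\in\Z^2 : |i|+|j|\le m-1\}$, which is then counted. The only differences are bookkeeping: you get injectivity from the fundamental window $[-(m-1),m]^2$ and count by slices of fixed $i$, whereas the paper counts the distance shells of size $4d$ and checks distinctness via the extreme points $c+(\pm(m-1),0)$ and $c+(0,\pm(m-1))$.
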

\begin{proof}
Let $B$ be the $(m-1)$-ball centered at $c \in V(\Teven)$.
The set $B$ contains $c$ and, for each distance $d \in [m-1]$, the set of vertices
$\NTeven^{d}(c) \coloneqq \{v \mid \dist_{\Teven}(c,v) = d\} = \{c + (i,j) \mid i,j \in [-d,d], \; |i|+|j| = d\}$.
From the definition, $\NTeven^{d}(c) \cap \NTeven^{d'}(c) = \emptyset$ for distinct $d, d' \in [m-1]$.
Furthermore, for every $d \in [m-1]$, the $4d$ pairs $(i,j)$ with $|i|+|j| = d$ give rise to $4d$ distinct vertices $c + (i,j)$,
which can be verified by checking the extreme cases $c + (-(m-1),0) \ne c + (m-1,0)$ and $c + (0, -(m-1)) \ne c + (0,m-1)$.
Thus, it follows that $|B| = 1 + \sum_{d=1}^{m-1} 4d = 2m^{2} - 2m + 1$.
\end{proof}

The next lemma lists some properties of $m$-boundary sets. See \cref{fig:m-boundary}.
\begin{lemma}
\label{lem:m-boundary-properties}
Let $M \subseteq V(\Teven)$ be the $m$-boundary of $c \in V(\Teven)$. Then the following hold.
\begin{enumerate}
  \item $M = \{c + (i,j) \mid i,j \in [-m,m], \; |i|+|j| = m\}$.
  \label{itm:m-boundary-properties:def}

  \item $|M| = 4m-2$. 
  \label{itm:m-boundary-properties:size}

  \item $M$ is the $m$-boundary of $c + (m,m)$. 
  \label{itm:m-boundary-properties:another-center}

  \item $\Teven - M$ has exactly two connected components and each of them is induced by an $(m-1)$-ball.
  \label{itm:m-boundary-properties:two-balls}
\end{enumerate}
\end{lemma}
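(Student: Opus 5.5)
We work throughout with the explicit distance formula in $\Teven$: for $c+(i,j)$ with representatives $i,j\in[-m+1,m]$, the distance to $c$ is $|i|+|j|$, since the cyclic distance in $C_{2m}$ between $0$ and $i$ is $\min(|i|,2m-|i|)$ and the torus is a Cartesian product. We then verify the four items in order.

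\textbf{Item~1.} With the above representatives, $\dist(c,c+(i,j))=m$ iff $|i|+|j|=m$. Allowing $i,j\in[-m,m]$ only re-lists the same vertices, because $c+(-m,j)=c+(m,j)$. Hence $M$ is exactly the stated set.

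\textbf{Item~2.} We count distinct vertices. Pairs $(i,j)$ with $i,j\in[-m+1,m]$ and $|i|+|j|=m$ are enumerated as follows:
\begin{itemize}
\item $i=m$ forces $j=0$, giving $1$ vertex;
\item $j=m$ forces $i=0$, giving $1$ vertex;
\item $|i|,|j|\le m-1$, both nonzero, $|i|+|j|=m$: there are $m-1$ choices of $|i|\in[1,m-1]$ and $4$ sign choices, giving $4(m-1)$ vertices.
\end{itemize}
Pairs with $i=-m$ or $j=-m$ are already counted. Since the representatives are distinct, the total is $4(m-1)+2=4m-2$.

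\textbf{Item~3.} Take $c'=c+(m,m)$ and use Item~1. For $v=c+(i,j)$ we have $v=c'+(i-m,j-m)$. Moving to cyclic representatives, the distance from $c'$ is $\delta(i-m)+\delta(j-m)$, where $\delta(x)=\min(|x| \bmod 2m,\,2m-|x| \bmod 2m)$. For $i\in[-m,m]$ one checks $\delta(i-m)=m-|i|$: if $i\ge0$ it equals $m-i$, and if $i<0$ then $i-m\equiv i+m$, giving $m+i=m-|i|$. Similarly $\delta(j-m)=m-|j|$. So the distance is $2m-(|i|+|j|)$, which equals $m$ iff $|i|+|j|=m$. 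Thus the $m$-boundary of $c'$ is $M$.

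\textbf{Item~4.} Let $B$ and $B'$ be the $(m-1)$-balls at $c$ and $c'$. Every vertex $v$ satisfies $\dist(c,v)+\dist(c',v)=2m$ by the computation in Item~3. Hence:
\begin{itemize}
\item vertices at distance $<m$ from $c$ are exactly those at distance $>m$ from $c'$;
\item since the maximum distance in $\Teven$ is $2m/2+2m/2=2m$, the vertices at distance $>m$ from $c$ are exactly those at distance $<m$ from $c'$.
\end{itemize}
So $V\setminus M=B\sqcup B'$. There is no edge between $B$ and $B'$, because adjacent vertices differ in distance to $c$ by at most $1$, and a vertex of $B$ has distance $\le m-1$ while a vertex of $B'$ has distance $\ge m+1$.

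Each ball is connected, since every vertex has a shortest path to its center that stays inside the ball. Both balls are nonempty, so $\Teven-M$ has exactly the two components $\Teven[B]$ and $\Teven[B']$.

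As a consistency check, $2(2m^2-2m+1)+4m-2=4m^2$ by \cref{lem:m-1-ball-size}.

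The main obstacle is the modular bookkeeping in Item~3, namely the identity $\dist(c,v)+\dist(c+(m,m),v)=2m$. Everything else follows from it.
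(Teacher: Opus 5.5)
Your proof is correct, but it takes a different route from the paper for Items~3 and~4; Items~1 and~2 are essentially the paper's argument. The paper counts the $4m$ pairs in $[-m,m]^2$ and discards the two duplicates, while you count over the representatives $[-m+1,m]$. That is the same count.

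\textbf{Item~3.} The paper only checks that each $c+(i,j)$ with $|i|+|j|=m$ lies at distance $m$ from $c+(m,m)$. This is one inclusion, and equality is left implicit via the cardinality from Item~2. You instead prove the global identity $\dist(c,v)+\dist(c+(m,m),v)=2m$ for every vertex $v$, which gives both inclusions at once.

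\textbf{Item~4.} The paper argues in three steps:
\begin{enumerate}
\item The two balls $B,B'$ are components of $\Teven-M$, since they are connected and $\NTeven(B)=\NTeven(B')=M$.
\item They are distinct, because their centers are at distance $2m$.
\item No further component exists, by the count $|M|+|B|+|B'|=4m^2$ from Lemma~\ref{lem:m-1-ball-size}.
\end{enumerate}
You instead read off from your identity that $V(\Teven)\setminus M=B\sqcup B'$ exactly. You then exclude edges between $B$ and $B'$ using the fact that adjacent vertices differ by at most $1$ in distance to $c$.

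\textbf{Comparison.} Your argument is self-contained: it needs neither the ball-size lemma (which you use only as a consistency check) nor the fact $\NTeven(B)=M$. The paper's version is shorter given the lemmas already in place. Your remark that the maximum distance is $2m$ is superfluous, since the second bullet of Item~4 follows from the identity exactly as the first does.
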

\begin{proof}
(\ref{itm:m-boundary-properties:def})
This is equivalent to the definition $M = \{v \mid \dist_{\Teven}(c,v) = m\}$
because a shortest path in $\Teven$ contains at most $m$ horizontal edges and at most $m$ vertical edges.

(\ref{itm:m-boundary-properties:size})
There are $4m$ combinations of $(i,j)$ satisfying $i,j \in [-m,m]$ and $|i|+|j| = m$,
and exactly two pairs of them represent the same vertices:
$c + (m,0) = c + (-m,0)$ and $c + (0,m) = c + (0,-m)$.

(\ref{itm:m-boundary-properties:another-center})
For $i,j \in [-m, m]$ with $|i| + |j| = m$,
$\dist_{\Teven}(c+(m,m), c+(i,j)) = \dist_{\Teven}((m,m), (i,j)) = m$.

(\ref{itm:m-boundary-properties:two-balls})
Let $B$ be the $(m-1)$-ball centered at $c$.
Since $B$ is connected and $\NTeven(B) = M$, $\Teven[B]$ is a connected component of $\Teven - M$.
Similarly, the $(m-1)$-ball $B'$ centered at $c + (m,m)$ induces a connected component of $\Teven - M$.
Since $\dist_{\Teven}(c, c+(m,m)) = 2m$, these two components are distinct.
Thus, by \cref{lem:m-1-ball-size}, it follows that $|M| + |B| + |B'| = (4m-2) + 2(2m^{2} - 2m + 1) = 4m^{2} = |V(\Teven)|$,
which implies that there is no other connected component of $\Teven - M$.
\end{proof}

\begin{figure}[tbh]
  \centering
  \includegraphics{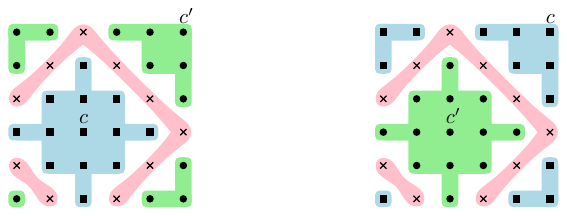}
  \caption{The $(m-1)$-balls centered at $c$ and $c' = c + (m,m)$ in two different drawings.}
  \label{fig:m-boundary}
\end{figure}

\begin{lemma}
\label{lem:m-ball-properties}
Let $m \ge 3$ and let $B$ be the $(m-1)$-ball centered at $c \in V(\Teven)$.
For every $v \in B$, the induced subgraph $\Teven[B \setminus \{v\}]$ has a unique maximum component $D$.
This component $D$ satisfies $|V(D)| \ge 2m^{2}-2m-1$ and $\NTeven(V(D)) = \NTeven(B) \cup \{v\}$.
\end{lemma}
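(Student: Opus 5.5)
Since $m \ge 3$ and $B$ has radius $m-1 < m$, the ball $B$ embeds isometrically into the infinite grid as the $\ell_1$-ball $\{c+(i,j) : |i|+|j| \le m-1\}$, because no two of its points are identified and adjacencies inside $B$ are exactly grid adjacencies. So I will reason about the diamond $B$ in $\Z^2$ and transfer the conclusions back to $\Teven$. The plan has three steps: describe the components of $B \setminus \{v\}$, identify the big one $D$, and compute $\NTeven(V(D))$.

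\textbf{Step 1 (structure of $B\setminus\{v\}$).} I will show that removing one vertex $v$ from the diamond leaves one big component, plus possibly a few isolated vertices.
\begin{itemize}
\item If $v$ is a corner $c \pm (m-1,0)$ or $c \pm (0,m-1)$, its unique neighbour in $B$ still has other neighbours. So $B\setminus\{v\}$ is connected, and $|D| = |B|-1 = 2m^2-2m$.
\item Suppose $v$ is on the boundary ($|i|+|j| = m-1$) but is not a corner. Then $v$ has exactly two neighbours in $B$, namely the ones one step closer to $c$. I expect $B \setminus\{v\}$ to stay connected, because boundary neighbours of $v$ reconnect through the interior. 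The exception is a boundary vertex adjacent to a corner, say $v = c+(m-2,1)$: removing it isolates nothing, since the corner $c+(m-1,0)$ still reaches $c+(m-2,0)$.
\item If $v$ is interior ($|i|+|j| \le m-2$), I will argue that $B\setminus\{v\}$ is connected. The layered structure (concentric $\ell_1$-spheres, each a cycle-like ring in the diamond) lets every vertex reach $c$ or the outer ring avoiding $v$. For $m\ge 3$ there are at least two disjoint routes around $v$.
\end{itemize}

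Some care is needed with corners. For example, if $v = c+(m-2,0)$ is interior, the corner $c+(m-1,0)$ has neighbours only $c+(m-2,0)$ inside $B$ (its other grid neighbours lie on $M$). So removing $v$ isolates that corner. In general, the vertices that can be isolated are exactly the corners whose unique in-ball neighbour is $v$. At most one such corner exists, so $|V(D)| \ge |B|-2 = 2m^2-2m-1$. I will verify that every other vertex has at least two neighbours in $B$, and that these lie in a 2-connected region once the corners are set aside. The diamond with its four corner tips cut off is a 2-connected polyomino when $m \ge 3$. Uniqueness of the maximum component is then immediate, since the other component has size at most $1 < 2m^2-2m-1$.

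\textbf{Step 2 (the neighbourhood).} Write $V(D) = B\setminus(\{v\}\cup I)$, where $I$ is the possibly isolated corner. The set $N(V(D))$ consists of $v$, the vertex in $I$ (if any), and those vertices of $M$ adjacent to $V(D)$. We need $N(V(D)) = M\cup\{v\}$, so two things must be checked. First, every $u \in M$ has a neighbour in $V(D)$. Each $u\in M$ has one or two neighbours in $B$; those with two neighbours are fine, since $v$ can remove at most one. Second, $I$ must lie in $N(V(D))$ or, better, be empty. Here is the problem: when $v=c+(m-2,0)$, the corner is isolated, but it is not in $M\cup\{v\}$. This would contradict the statement, so I must recheck it.

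The corner $c+(m-1,0)$ is actually adjacent in the torus to $c+(m,0)$, which is in $M$. It is also adjacent to $c+(m-1,\pm1)$, which are at distance $m$ and hence in $M$. So it is indeed isolated in $B\setminus\{v\}$, and that recheck does not remove the difficulty. The tip of the diamond is the case to resolve, because of degree: the tip $c+(m-1,0)$ has neighbours $c+(m-2,0)$, $c+(m,0)$, and $c+(m-1,\pm 1)$, so only one of them is in $B$. Removing $c+(m-2,0)$ therefore isolates the tip, and the tip is then a neighbour of nothing in $D$. Consequently $N(V(D))$ would omit no vertex of $M$ only if $c+(m,0)$ and $c+(m-1,\pm1)$ also have other neighbours in $D$:
\begin{itemize}
\item $c+(m-1,1)$ is adjacent to $c+(m-2,1)\in D$;
\item $c+(m,0)=c+(-m,0)$ is adjacent to $c+(-m+1,0)\in D$.
\end{itemize}
So all of $M$ is covered. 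The tip itself is not in $N(V(D))$, which is consistent with the statement, since the statement does not require it. For the same reason, vertices of $M$ whose only $B$-neighbour is $v$ must be checked using the wraparound identifications $c+(\pm m,0)$ and $c+(0,\pm m)$. This per-case verification of $M$-coverage is the main obstacle. I will handle it by listing the $B$-neighbours of each $u\in M$ (two neighbours unless $u$ is one of the two wrap vertices, which have exactly two, one on each side) and noting that the only danger is $u$ whose $B$-neighbours are both in $\{v\}\cup I$. That is impossible, since such a $u$ would be adjacent to both $v$ and $I$, which are adjacent to each other, and adjacent vertices share no common neighbour in a bipartite grid.
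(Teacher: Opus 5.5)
\textbf{Verdict: the route matches the paper's, but the step that carries the lemma is asserted rather than proved.}

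Your approach is the paper's. Deleting $v$ from $\Teven[B]$ can only detach a tip $c\pm(m-1,0)$ or $c\pm(0,m-1)$ whose unique in-ball neighbour is $v$. This gives $|V(D)|\ge |B|-2$ and uniqueness, after which every vertex of $\NTeven(B)$ must keep a neighbour in $D$. Your final Step~2 argument is sound. Each $u\in\NTeven(B)$ has exactly two neighbours in $B$, both at distance $m-1$ from $c$, and they cannot both lie in $\{v\}\cup I$ because $\Teven$ is bipartite. This is equivalent to the paper's remark that $D$ misses at most one vertex at distance $m-1$.

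\textbf{The gap is in Step~1.} Knowing which vertices can become isolated, and that every non-tip vertex has two neighbours in $B$, does not exclude a cut vertex splitting $B$ into two large pieces. Your bullets only say ``I expect'' and ``I will argue''. The decisive sentence, that the diamond minus its four tips is $2$-connected for $m\ge 3$, is asserted without proof. It is true and would give all of Step~1, but it is exactly what the paper proves in its claim that the cut vertices of $\Teven[B]$ are precisely $c\pm(m-2,0)$ and $c\pm(0,m-2)$. The paper argues by induction on the radius $d$:
\begin{itemize}
\item Unless $v$ is the neighbour of a tip at distance $d$, every vertex at distance $d$ keeps a neighbour at distance $d-1$ other than $v$.
\item When $v=c+(d-2,0)$ cuts off $c+(d-1,0)$ in the smaller ball, the new layer reattaches it through $c+(d-1,1)$ and $c+(d-2,1)$.
\end{itemize}
A short alternative for your formulation is the following. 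For $1\le d\le m-2$, the vertices at distance $d$ or $d+1$ from $c$, minus the four tips at distance $d+1$, induce a cycle of length $8d$. Consecutive such cycles share $4d\ge 4$ vertices, and $c$ is adjacent to four vertices of the first one. Their union is the truncated diamond, so it is $2$-connected.

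\textbf{Intermediate statements to delete.} Several claims are wrong and should be removed rather than retracted later in the text.
\begin{itemize}
\item $B$ does not embed isometrically. The tips $c+(m-1,0)$ and $c-(m-1,0)$ are at distance $2$ in $\Teven$, through $c+(m,0)$. Only the induced-subgraph isomorphism with the diamond holds, and that is all Step~1 uses.
\item $c+(m-2,1)$ is not adjacent to any tip. By bipartiteness, no two vertices at the same distance from $c$ are adjacent.
\item Your interior bullet claims $B\setminus\{v\}$ is connected for every interior $v$. This fails for the four cut vertices.
\item In Step~2 you first place the isolated tip in $\NTeven(V(D))$. It has no neighbour in $D$, which is precisely why the statement holds.
\item Every $u\in\NTeven(B)$ has exactly two neighbours in $B$, not ``one or two''.
\end{itemize}
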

\begin{proof}
Let $A = \{c + (i,j) \mid (i,j) \in \{(m-2,0), (-(m-2),0), (0,m-2), (0,-(m-2))\}\}$ be the set of neighbors of the degree-$1$ vertices in $\Teven[B]$.
We can see that for each $a \in A$, $\Teven[B \setminus \{a\}]$ has two components and they have size $1$ and $|B|-2$.
We show that there is no other cut vertex.
\begin{claim}
\label{clm:ball-cut}
$A$ is the set of cut vertices of $\Teven[B]$.
\end{claim}
\begin{subproof}[Proof of \cref{clm:ball-cut}]
For $d \in [m-1]$, let $H_{d}$ be the subgraph of $\Teven[B]$ induced by the vertices with distance at most $d$ from $c$,
and let $A_{d} = \{c + (i,j) \mid (i,j) \in \{(d-1,0), (-(d-1),0), (0,d-1), (0,-(d-1))\}\}$.
We show that for $d \in [m-1]$, every cut vertex of $H_{d}$ belongs to $A_{d}$.
This proves the claim since $H_{m-1} = \Teven[B]$ and $A_{m-1} = A$.

The case $d \le 2$ can be checked directly.
Assume that $d \ge 3$ and that $A_{d-1}$ is the set of cut vertices of $H_{d-1}$.
Let $v \in V(H_{d}) \setminus A_{d}$. We show that $H_{d} - \{v\}$ is connected. 

First consider the case where $v \notin A_{d-1}$, and thus $H_{d-1} - \{v\}$ is connected (no matter whether $v \in V(H_{d-1})$ or not).
Since $v \notin A_{d}$, each vertex of distance $d$ from $c$ has a neighbor in $V(H_{d-1}) \setminus \{v\}$, and thus $H_{d} - \{v\}$ is connected.

Next consider the case where $v \in A_{d-1}$.
By symmetry, we may assume that $v = c + (d-2,0)$.
Now, $H_{d-1} - \{v\}$ has two connected components with the vertex sets $\{v + (1,0)\}$ ($ = \{c + (d-1,0)\}$) and $H_{d-1} - \{v, v+(1,0)\}$.
It follows that $H_{d} - \{v\}$ is connected since each vertex of distance $d$ from $c$ has a neighbor in $V(H_{d-1}) \setminus \{v\}$
and there is a path $(v+(1,0))$--$(v+(1,1))$--$(v+(0,1))$ from $v+(1,0)$ to $V(H_{d-1}) \setminus \{v, v+(1,0)\}$.
\end{subproof}

By \cref{lem:m-1-ball-size}, $|B| = 2m^{2}-2m+1$.
By \cref{clm:ball-cut}, a maximum component, say $D$, has at least $2m^{2}-2m-1$ vertices.
This component is unique since $2|V(D)| > |B|$.
To complete the proof of the lemma, we need to show that $\NTeven(V(D)) = \NTeven(B) \cup \{v\}$.

We first show that $\NTeven(B) \subseteq \NTeven(V(D))$.
We have $v \in \NTeven(V(D))$ since $v \in B$ and $D$ is a connected component of $\Teven[B \setminus \{v\}]$.
Thus, to prove that $\NTeven(B) \subseteq \NTeven(V(D))$, it suffices to show that each vertex in $\NTeven(B)$ has
at least two neighbors in $\{u \mid \dist_{\Teven}(c,u) = m-1\}$ ($\subseteq B$)
since $D$ misses at most one vertex of $\{u \mid \dist_{\Teven}(c,u) = m-1\}$.
Recall that $\NTeven(B)$ is the $m$-boundary of $c$; that is, $\NTeven(B) = \{u \mid \dist_{\Teven}(c,u) = m\}$.
If a vertex $u \in \NTeven(B)$ is $c + (m,0)$ ($=c + (-m,0)$) or $c + (0,m)$ ($=c + (0,-m)$),
then $u$ has two vertical neighbors $c + (m-1,0)$ and $c + (-m+1,0)$ or two horizontal neighbors $c + (0,m-1)$ and $c + (0,-m+1)$, respectively.
Otherwise $u \in \NTeven(B) \setminus \{c + (m,0), c + (0,m)\}$, and thus it has one vertical neighbor and one horizontal neighbor in $\{w \mid \dist_{\Teven}(c,w) = m-1\}$. 

Finally, we show that $\NTeven(V(D)) \subseteq \NTeven(B) \cup \{v\}$.
Indeed, this is an immediate corollary of the facts that
$D$ is a connected component of $\Teven[B \setminus \{v\}]$ and that
$\NTeven(V(D)) \subseteq  \NTeven(B \setminus \{v\}) = \NTeven(B) \cup \{v\}$.
\end{proof}

The next lemma is the key lemma for the even case
that says that the $m$-boundary sets are the only obstacles preventing the odd-case construction here.
(Note that this lemma holds for $m \ge 2$.)
\begin{lemma}
\label{lem:only_m-boundary_for_two_2m^2-2m}
Let $m \ge 2$ and $M \subseteq V(\Teven)$ be a set with $|M| \le 4m-2$.
If $\Teven - M$ has two connected components each having at least $2m^{2} - 2m$ vertices,
then $M$ is an $m$-boundary set.
\end{lemma}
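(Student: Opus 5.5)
Let $X$ and $Y$ be the vertex sets of two components of $\Teven - M$ with $|X|,|Y| \ge 2m^{2}-2m$. Since $|V(\Teven)| = 4m^{2}$, we have
\[
|X|+|Y| \ge 4m^{2}-4m \ge 4m^{2}-|M|-2 .
\]
Hence $\NTeven(X),\NTeven(Y) \subseteq M$ and at most two vertices lie outside $X \cup Y \cup M$. The plan is to show that $|\NTeven(X)| \ge 4m-2$. This forces $\NTeven(X) = M$ and $|M| = 4m-2$, and then to show that equality forces $X$ to be an $(m-1)$-ball, so that $M$ is an $m$-boundary set.

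\textbf{Step 1 (rows and columns).} Let $r,c$ be the numbers of rows and columns met by $X$, and let $r',c'$ be those for $Y$. If $X$ meets every row, then every column avoiding $X$ contributes at least $0$, and every row contains a vertex of $X$. Since $Y$ is nonempty and does not touch $X$, each row also contains at least one vertex not in $X$. Going around each row cycle, the row then contains at least two vertices of $\NTeven(X)$, because on a cycle a proper nonempty subset has at least two boundary vertices. This gives $|M| \ge 2\cdot 2m = 4m > 4m-2$, a contradiction.

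So each of $X$ and $Y$ misses at least one row and at least one column. If $X$ misses exactly one row, then each of the $2m-1$ rows meeting $X$ holds at least two vertices of $M$, and the missing row must contain $Y\cap$ that row plus neighbours of $X$. That gives roughly $4m-2$ vertices of $M$ from the rows alone, and combining with the same count for $Y$ should give a contradiction or pin down the structure. I would treat these cases by the row/column counting used in \cref{lem:lower_bound_neighbor_odd}.

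\textbf{Step 2 (the generic case).} In the main case, $X$ misses at least two rows and at least two columns. By \cref{lem:empty-row-and-column} and \cref{lem:bGi_lower_bound} with $s \ge 2m^{2}-2m$, we get $|\NTeven(X)| \ge \bGi(|X|) \ge 4m$. This already exceeds $4m-2$, a contradiction.

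So the tight situation must come from Step 1. There, $X$ misses exactly one row, or exactly one column (or both), in a configuration like the balls of \cref{lem:m-boundary-properties}, where each ball meets $2m-1$ rows. Thus the real work is the case in which $X$ misses exactly one row or exactly one column.

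\textbf{Step 3 (the tight case, main obstacle).} Suppose $X$ meets exactly $2m-1$ rows and exactly $2m-1$ columns, and likewise for $Y$. Then each row meeting $X$ contains at least two vertices of $M$. Since $|M| \le 4m-2$, every row meeting $X$ contains exactly two vertices of $M$, and the same holds for columns. Moreover, the unique row avoiding $X$ must lie inside $Y \cup M$.

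From these exact counts I would argue row by row. Each row meeting $X$ is split by exactly two $M$-vertices into an $X$-interval and a $Y$-interval, which leaves at most the two slack vertices. The interval lengths must change by at most one per step, and by two only at the extreme rows. This forces the diamond shape $\{c+(i,j): |i|+|j|\le m-1\}$.

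I expect this shape-forcing argument, including handling the two slack vertices and the cases with $2m$ rows met, to be the main obstacle. An alternative I would try first is to map $X$ into $\Gi$ via \cref{lem:almost-empty-row-and-column}, using the row and column that avoid $X$ and contain at most one $M$-vertex each. Then I would invoke the characterization of extremal sets by Gupta et al.~\cite{GuptaLMS21} at size $2m^{2}-2m$ or $2m^{2}-2m+1$ (namely near-balls) to deduce that $X$ is an $(m-1)$-ball. Once $X$ is the $(m-1)$-ball centered at some $c$, we have $M \supseteq \NTeven(X)$, which is the $m$-boundary of $c$ of size $4m-2$, and hence $M$ equals it.
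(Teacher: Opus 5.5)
Your proposal has a genuine gap. Step~2 is sound and matches a step of the paper. The failure is in Step~3 (and already in the subcase of Step~1 where $X$ misses exactly one row), which is exactly where the extremal configuration lives.

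You assert that every row meeting $X$ contains at least, hence exactly, two vertices of $M$. This is false for the very sets the lemma describes. If $X$ is the $(m-1)$-ball centered at $c'$, the row of $c'$ consists of the $2m-1$ vertices $c'+(0,j)$, $|j|\le m-1$, of $X$ and the single vertex $c'+(0,m)$ of $M$; it contains no vertex of $Y$. Your cycle fact fails because the complement of $X$ in that row is one vertex. In Step~1 you also overlook rows lying entirely in $X$.

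Worse, your premises are inconsistent. If $X$ misses exactly one row $R_0$, it meets both rows adjacent to $R_0$, so $R_0$ contains a vertex of $\NTeven(X)\subseteq M$. Two $M$-vertices in each of the other $2m-1$ rows would then give $|M|\ge 4m-1$. So your reasoning would ``prove'' that no $M$ satisfies the hypotheses at all. The interval/diamond-forcing argument built on ``each row is split into an $X$-interval and a $Y$-interval'' therefore cannot be repaired as stated. (Minor: the opening inequality $4m^{2}-4m\ge 4m^{2}-|M|-2$ runs the wrong way unless $|M|=4m-2$.)

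The fallback via \cite{GuptaLMS21} does not close the gap either:
\begin{itemize}
\item It needs the row and column missed by $X$ to contain at most one vertex of $M$ each, which you never derive.
\item Because \cref{lem:almost-empty-row-and-column} loses $2$, you would have to classify all sets of size $2m^{2}-2m$ or $2m^{2}-2m+1$ with $4m$ neighbours in $\Gi$. These are not only near-balls: for $m=3$, the octagon with centred rows of lengths $2,4,4,2$ has $12$ vertices and $12$ neighbours.
\item You would then have to analyse when wrapping around the torus saves exactly two.
\end{itemize}

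The idea you are missing is to \emph{use} the rows and columns carrying a single vertex of $M$ instead of excluding them. The paper proceeds as follows.
\begin{itemize}
\item It first shows that every row and column meets $M$. Otherwise one of $X,Y$ avoids two rows and two columns, and \cref{lem:empty-row-and-column,lem:bGi_lower_bound} give at least $4m$ neighbours.
\item Counting then yields rows $R_{1},R_{2}$ and columns $C_{1},C_{2}$ with exactly one vertex of $M$ each; these are the center lines of the two balls.
\item Both $X$ and $Y$ must meet $R_{1}\cup R_{2}\cup C_{1}\cup C_{2}$, so this union minus $M$ is disconnected. That forces the $M$-vertex of each $R_{i}$ onto one of the $C_{j}$ (after relabelling, $R_{1}\cap M=C_{2}\cap M$ and $R_{2}\cap M=C_{1}\cap M$), and forces the two rows and the two columns to be non-consecutive.
\item Counting internally disjoint paths in the four rectangles gives $|M|\ge 4m-2+|a-b|+|a+b-2(m-1)|$, hence $a=b=m-1$.
\item $M$ is then pinned down diagonal by diagonal as the $m$-boundary of $R_{1}\cap C_{1}$.
\end{itemize}
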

\begin{proof}
Let $X$ and $Y$ be the vertex sets of two connected components of $\Teven - M$ such that $|X|, |Y| \ge 2m^{2} - 2m$.
Since $\NTeven(X), \NTeven(Y) \subseteq M$, it follows that $|\NTeven(X)|, |\NTeven(Y)| \le |M| \le 4m-2$.
Since there are $2m$ rows, either 
there is a row not intersecting $M$, or
there are two rows such that each of them contains exactly one vertex of $M$.
The same holds for columns as well.

\begin{claim}
\label{clm:only_m-boundary:M_intersect}
All rows and columns intersect $M$.
\end{claim}
\begin{subproof}[Proof of \cref{clm:only_m-boundary:M_intersect}]
Suppose to the contrary that  
there is a row or a column that does not intersect $M$.
By symmetry, we may assume that there is a row $R$ not intersecting $M$.
Since $R$ is connected, at least one of the connected sets $X$ and $Y$ is disjoint from $R$.
We may assume by symmetry that $X \cap R = \emptyset$.
Observe that $X$ does not intersect the rows next to $R$ since $\NTeven(X) \subseteq M$ and $M \cap R = \emptyset$.
Thus, there are at least two (indeed, three including $R$) rows not intersecting $X$.

If there is a column $C$ not intersecting $M$, then $X \cap (R \cup C) = \emptyset$ since $R \cup C$ is connected and $X \cap R = \emptyset$.
Thus, as before, there are two columns not intersecting $X$.
Otherwise, there are two columns $C_{1}$ and $C_{2}$ such that $|C_{1} \cap M| = |C_{2} \cap M| = 1$.
Observe that $(R \cup C_{1} \cup C_{2}) \setminus M$ is connected because, for $i \in \{1,2\}$, 
removing one vertex from $C_{i} \setminus R$ cannot separate $C_{i}$ from $R$.
This implies that $X \cap ((R \cup C_{1} \cup C_{2}) \setminus M) = X \cap (R \cup C_{1} \cup C_{2}) = \emptyset$, and thus, $X \cap (C_{1} \cup C_{2}) = \emptyset$.
In both cases, we have two columns that do not intersect $X$.

By \cref{lem:empty-row-and-column}, it follows that $|\NTeven(X)| \ge \bGi(|X|)$.
Now, \cref{lem:bGi_lower_bound} with $|X| \ge 2m^{2}-2m$ implies that $|\NTeven(X)| \ge \bGi(|X|) \ge 4m > |M|$, which contradicts $\NTeven(X) \subseteq M$.
\end{subproof}

By \cref{clm:only_m-boundary:M_intersect}, there are two rows $R_{1}$, $R_{2}$ and two columns $C_{1}$, $C_{2}$ 
such that $|R_{1} \cap M| = |R_{2} \cap M| = |C_{1} \cap M| = |C_{2} \cap M| = 1$.
If $X$ is disjoint from $R_{1} \cup R_{2} \cup C_{1} \cup C_{2}$,
then $|\NTeven(X)| \ge 4m$ holds by \cref{lem:bGi_lower_bound,lem:empty-row-and-column}, a contradiction.
Thus, $X$ intersects $R_{1} \cup R_{2} \cup C_{1} \cup C_{2}$.
By the same argument, it follows that $Y$ intersects $R_{1} \cup R_{2} \cup C_{1} \cup C_{2}$ as well.
Since $X$ and $Y$ are connected, $(R_{1} \cup R_{2} \cup C_{1} \cup C_{2}) \setminus M$ has to be disconnected.
\begin{claim}
\label{clm:only_m-boundary:R-C-intersection}
Let $R_{i} \cap M = \{v_{i}\}$ for $i \in \{1,2\}$.
Then, $v_{i} \in C_{1} \cup C_{2}$ for $i \in \{1,2\}$.
\end{claim}
\begin{subproof}[Proof of \cref{clm:only_m-boundary:R-C-intersection}]
By symmetry, it suffices to show that $v_{1} \in C_{1} \cup C_{2}$.
Suppose to the contrary that $v_{1} \notin C_{1} \cup C_{2}$.
Observe that $R_{1} \setminus M$, $R_{2} \setminus M$, $C_{1} \setminus M$, and $C_{2} \setminus M$ are connected as they induce paths.
The assumption $v_{1} \notin C_{1} \cup C_{2}$ implies that 
$R_{1} \setminus M$ and $C_{i} \setminus M$ share a vertex in $R_{1}$ for $i \in \{1,2\}$.
Hence, $(R_{1} \cup C_{1} \cup C_{2}) \setminus M$ is connected.
Since $R_{2} \setminus M$ includes at least one vertex of $C_{1} \setminus M$ or $C_{2} \setminus M$,
we obtain a contradiction that $(R_{1} \cup R_{2} \cup C_{1} \cup C_{2}) \setminus M$ is connected. 
\end{subproof}

Now, by swapping $C_{1}$ and $C_{2}$ if necessary, we assume that $v_{1} \in C_{2}$ and $v_{2} \in C_{1}$.
That is, $R_{1} \cap M = C_{2} \cap M = R_{1} \cap C_{2} \cap M = \{v_{1}\}$
and $R_{2} \cap M = C_{1} \cap M = R_{2} \cap C_{1} \cap M = \{v_{2}\}$.
This implies that $(R_{i} \cap C_{i}) \setminus M \ne \emptyset$ for $i \in \{1,2\}$, and thus $(R_{i} \cup C_{i}) \setminus M$ is connected.
See \cref{fig:only_m-boundary}.

\begin{claim}
\label{clm:only_m-boundary:non-consecutive}
$R_{1}$, $R_{2}$ and $C_{1}$, $C_{2}$ are non-consecutive.
\end{claim}
\begin{subproof}[Proof of \cref{clm:only_m-boundary:non-consecutive}]
Suppose to the contrary that $R_{1}$ and $R_{2}$ are consecutive. (The case of columns is symmetric.)
Since $2m > 2$, there is a column that intersects both $R_{1} \setminus M$ and $R_{2} \setminus M$.
Hence, $(R_{1} \cup R_{2}) \setminus M$ is connected.
Since $(R_{i} \cup C_{i}) \setminus M$ is connected for each $i \in \{1,2\}$, 
it follows that $(R_{1} \cup R_{2} \cup C_{1} \cup C_{2}) \setminus M$ is connected, a contradiction.
\end{subproof}

By arranging $\Teven$ so that the intersection of $R_{2}$ and $C_{2}$ is placed at the top-right corner, we define the number $a$ and $b$ such that
$a$ is the number of rows between $R_{1}$ and $R_{2}$ counted from top to bottom and $b$ is the number of columns between $C_{1}$ and $C_{2}$ counted from left to right.
See \cref{fig:only_m-boundary}.
\begin{figure}[tbh]
  \centering
  \includegraphics{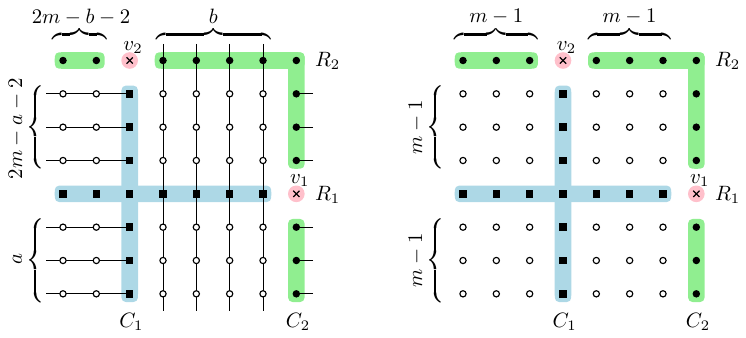}
  \caption{Internally vertex-disjoint paths between $(R_{1} \cup C_{1}) \setminus M$ and $(R_{2} \cup C_{2}) \setminus M$.}
  \label{fig:only_m-boundary}
\end{figure}

\begin{claim}
\label{clm:only_m-boundary:a=b=m-1}
$a = b = m-1$.
\end{claim}
\begin{subproof}[Proof of \cref{clm:only_m-boundary:a=b=m-1}]
From \cref{fig:only_m-boundary}, it follows that
in the four rectangular regions separated by $R_{1}$, $R_{2}$, $C_{1}$, and $C_{2}$,
there are $\max\{a,b\}$, $\max\{a, 2m-b-2\}$, $\max\{b,2m-a-2\}$, and $\max\{2m-a-2,2m-b-2\}$ internally vertex-disjoint paths 
between $(R_{1} \cup C_{1}) \setminus M$ and $(R_{2} \cup C_{2}) \setminus M$.
Since $M$ separates $R_{1} \cup C_{1}$ and $R_{2} \cup C_{2}$, it has to intersect all those paths.
This gives us the following lower bound of $|M|$:
\begin{align*}
  |M| 
  &\ge 
  2 + \max\{a,b\}+ \max\{2m-a-2,2m-b-2\} + \max\{a, 2m-b-2\} + \max\{b,2m-a-2\}
  \\
  &=
  2 + \max\{a-(m-1),b-(m-1)\}+ \max\{(m-1)-a, (m-1)-b\} \\
  &\qquad + \max\{a-(m-1), (m-1)-b\} + \max\{b-(m-1), (m-1)-a\} + 4(m-1)
  \\&=
  4m-2 + |(a-(m-1)) - (b-(m-1)) | + |(a-(m-1)) - ((m-1)-b)|
  \\&=
  4m-2 + |a - b| + |a+b-2(m-1)|,
\end{align*}
where the second line is obtained by subtracting $m-1$ from each $\max\{\cdot\}$ term and then canceling them by adding $4(m-1)$,
and the third line follows since $\max\{x,y\}+\max\{-x,-y\}=\max\{x-y,y-x\}=|x-y|$ for any integers $x$ and $y$.
Since $|M| \le 4m-2$,
it follows that $a = b$ and $a+b = 2(m-1)$, and thus $a = b = m-1$.
\end{subproof}

Now we determine exactly which vertices belong to $M$. 
For $i \in \{1,2\}$, let $B_{i}$ be the vertex set of the connected component of $\Teven - M$ that contains $(R_{i} \cup C_{i}) \setminus M$.
Observe that in each of the four $(m-1) \times (m-1)$ square regions separated by $R_{1}$, $R_{2}$, $C_{1}$, and $C_{2}$,
the set $M$ contains exactly one vertex in each row and in each column to hit all paths between $B_{1}$ and $B_{2}$ while keeping $|M| \le 4m-2 = 4(m-1)+2$,
where the $+2$ term counts $v_{1}$ and $v_{2}$.
This implies that each row or column other than $R_{1}$, $R_{2}$, $C_{1}$, and $C_{2}$ contains exactly two elements of $M$.

Let $c$ be the unique vertex in $R_{1} \cap C_{1}$.
We show that $M$ is the $m$-boundary of $c$.
Recall that $v_{1}, v_{2} \in M$.
It follows that $v_{1} = c + (0,m)$ and $v_{2} = c + (m,0)$.
Now consider the four vertices 
\begin{align*}
&v_{2} + (+1, +1) = c + (m+1, +1) = c + (-(m-1), +1), \\
&v_{2} + (+1, -1) = c + (m+1, -1) = c + (-(m-1), -1), \\
&v_{2} + (-1, +1) = c + (m-1, +1), \\
&v_{2} + (-1, -1) = c + (m-1, -1).
\end{align*}
These four vertices are adjacent to both $B_{1}$ and $B_{2}$, and thus they belong to $M$ (see \cref{fig:only_m-boundary_M}~(left)).
Each of the rows next to $R_{2}$ and the columns next to $C_{1}$ contains two of these four elements of $M$,
and hence the other vertices in these rows and columns  belong to $B_{1}$ or $B_{2}$ depending on which set they are adjacent to (see \cref{fig:only_m-boundary_M}~(center)).

We repeat this process as many times as possible.
In general, at the $i$th step, 
we find that the following four vertices are adjacent to both $B_{1}$ and $B_{2}$, and thus they belong to $M$ (see \cref{fig:only_m-boundary_M}~(center and right) for $i \in \{2,3\}$):
\begin{align*}
&v_{2} + (+i, +i) = c + (m+i, +i) = c + (-(m-i), +i), \\
&v_{2} + (+i, -i) = c + (m+i, -i) = c + (-(m-i), -i), \\
&v_{2} + (-i, +i) = c + (m-i, +i), \\
&v_{2} + (-i, -i) = c + (m-i, -i).
\end{align*}
Then the other vertices in the same rows and columns with one of these four vertices can be placed into $B_{1}$ or $B_{2}$ 
depending on their adjacency to already revealed vertices of $B_{1}$ and $B_{2}$.

After the $(m-1)$st step, we stop and find a partition of $V(\Teven)$ into $M$, $B_{1}$, and $B_{2}$.
With $v_{1} = c + (0,m)$ ($= c+(0,-m)$) and $v_{2} = c + (m,0)$ ($=c+(-m,0)$), 
it follows that $M = \{c + (i,j) \mid i,j \in [-m,m], \; |i|+|j| = m\}$.
Therefore, $M$ is the $m$-boundary of $c$.
\end{proof}
\begin{figure}[tbh]
  \centering
  \includegraphics{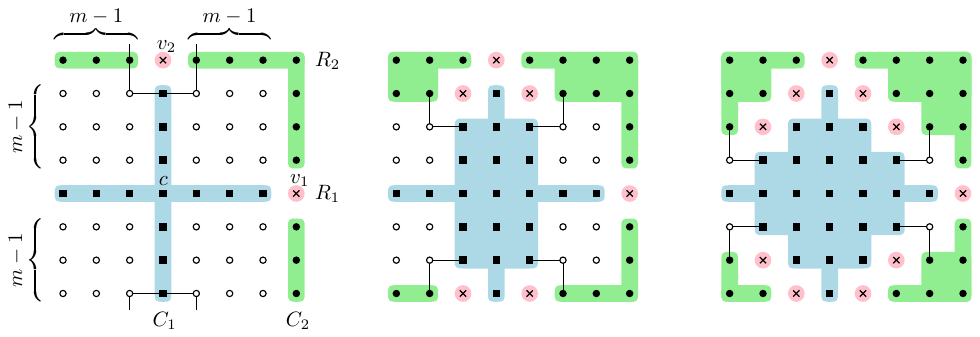}
  \caption{A small separator $M$ has to be an $m$-boundary set.}
  \label{fig:only_m-boundary_M}
\end{figure}

\subsection{Constructing a bramble of $\Teven$}

Now we are ready to construct a bramble of $\Teven$ with order $4m$ for $2m \ge 6$.
We say that a set $J \subseteq V(\Teven)$ is \emph{$m$-boundary-free} if it contains no $m$-boundary set as a subset.
\begin{itemize}
  \setlength{\itemsep}{0pt}

  \item For each $m$-boundary-free set $J \subseteq V(\Teven)$ of size $4m-1$,
  we set $H_{J}$ to be a maximum component of $\Teven - J$.
  If there are two or more maximum components, then we pick an arbitrary one of them as $H_{J}$.

  \item Let $M \subseteq V(\Teven)$ be an $m$-boundary set.
  By \cref{lem:m-boundary-properties}~(\ref{itm:m-boundary-properties:two-balls}),
  $\Teven - M$ has exactly two connected components and each of them is induced by an $(m-1)$-ball.
  Arbitrarily choose one of the balls, and call it $B_{M}$. (We call the other ball $B_{M}'$.)
  For each $v \in B_{M}$, let $B_{M}^{v}$ be the vertex set of the unique maximum component of $\Teven[B_{M} \setminus \{v\}]$
  (it is unique by \cref{lem:m-ball-properties}).
\end{itemize}
We set $\mathcal{E} = \mathcal{E}_{1} \cup \mathcal{E}_{2}$, where $\mathcal{E}_{1}$ and $\mathcal{E}_{2}$ are defined as follows:
\begin{align*}
  \mathcal{E}_{1} &= \{V(H_{J}) \mid m\text{-boundary-free set } J \subseteq V(\Teven), \, |J| = 4m-1\},
  \\
  \mathcal{E}_{2} &= \{B_{M}^{v} \mid m\text{-boundary set } M \subseteq V(\Teven), \, v \in B_{M}\}.
\end{align*}

We show that $\mathcal{E}$ is a bramble of $\Teven$ with order at least $4m$.
From the definition, each element of $\mathcal{E}$ is connected.
The next observation shows that a hitting set of $\mathcal{E}$ has size at least $4m$ as follows.

\begin{observation}
\label{obs:even-sets-order}
No set $S \subseteq V(\Teven)$ of size $4m-1$ intersects all elements of $\mathcal{E}$.
\end{observation}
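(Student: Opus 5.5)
Fix $S \subseteq V(\Teven)$ with $|S| = 4m-1$. The proof splits according to whether $S$ is $m$-boundary-free, so that the first case uses $\mathcal{E}_{1}$ and the second uses $\mathcal{E}_{2}$.

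\emph{Case 1: $S$ is $m$-boundary-free.} Then $S$ itself is one of the index sets in the definition of $\mathcal{E}_{1}$, so $V(H_{S}) \in \mathcal{E}_{1}$. Since $H_{S}$ is a component of $\Teven - S$, we have $V(H_{S}) \cap S = \emptyset$. Hence $S$ misses an element of $\mathcal{E}$. We should also check that $\Teven - S$ is nonempty, which is clear because $4m^{2} > 4m-1$.

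\emph{Case 2: $S$ contains an $m$-boundary set $M$.} By \cref{lem:m-boundary-properties}~(\ref{itm:m-boundary-properties:size}), $|M| = 4m-2$, so $S = M \cup \{u\}$ for a single vertex $u \notin M$. By \cref{lem:m-boundary-properties}~(\ref{itm:m-boundary-properties:two-balls}), $V(\Teven) \setminus M = B_{M} \cup B_{M}'$, so $u$ lies in exactly one of the two balls.
\begin{itemize}
\item If $u \in B_{M}$, take $v = u$ and consider $B_{M}^{u} \in \mathcal{E}_{2}$. This set is contained in $B_{M} \setminus \{u\}$, so it avoids $M$ (it lies inside $B_{M}$) and it avoids $u$. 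Hence it is disjoint from $S$.
\item If $u \in B_{M}'$, pick any $v \in B_{M}$; such a $v$ exists since $|B_{M}| = 2m^{2}-2m+1 \ge 1$ by \cref{lem:m-1-ball-size}. Then $B_{M}^{v} \subseteq B_{M}$, which is disjoint from $M$ and from $B_{M}'$, and therefore from $S$.
\end{itemize}
In every case some element of $\mathcal{E}$ avoids $S$, which proves the observation.

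The only point needing care is that $B_{M}^{v}$ is a well-defined nonempty member of $\mathcal{E}_{2}$. Uniqueness of the maximum component, and in fact $|B_{M}^{v}| \ge 2m^{2}-2m-1 > 0$, come from \cref{lem:m-ball-properties}, which is where $m \ge 3$ is used. A second potential subtlety is that $S$ might contain two different $m$-boundary sets. This does not matter, since we fix one such $M$ and the argument above never uses uniqueness. I expect no real obstacle; the observation is essentially bookkeeping on the definitions.
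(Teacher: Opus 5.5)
Your proposal is correct and follows the paper's proof essentially verbatim. Both use the same split: if $S$ is $m$-boundary-free, use $V(H_S)\in\mathcal{E}_1$; otherwise write $S=M\cup\{v\}$ and use $B_M^{v}$ when $v\in B_M$, or any $B_M^{w}$ with $w\in B_M$ when $v\notin B_M$.
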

\begin{proof}
If $S$ is an $m$-boundary-free set, then $V(H_{S}) \in \mathcal{E}_{1}$ and $V(H_{S}) \cap S = \emptyset$ as $H_{S}$ is a connected component of $\Teven - S$.
Otherwise, $S = M \cup \{v\}$ for some $m$-boundary set $M$ and some vertex $v \notin M$.
If $v \in B_{M}$, then $B_{M}^{v} \in \mathcal{E}_{2}$ and $B_{M}^{v} \cap S = \emptyset$
as $B_{M}^{v}$ induces a connected component of $\Teven - S$.
If $v \notin B_{M}$, then $S = M \cup \{v\}$ does not intersect $B_{M}^{w} \in \mathcal{E}_{2}$ for any $w \in B_{M}$
since $B_{M}$ induces a connected component of $\Teven - M$, and thus
$B_{M}^{w} \cap S \subseteq B_{M} \cap S = B_{M} \cap (M \cup \{v\}) = \emptyset$.
\end{proof}

Now, it suffices to show that any two elements of $\mathcal{E}$ touch.
We first show that each element of $\mathcal{E}$ is large
and then we proceed to the final step of the proof that puts everything together.

\begin{lemma}
\label{lem:even_lower_bound_2m^2-2m}
Let $m \ge 2$.
If $J \subseteq V(\Teven)$ has size $4m-1$,
then a maximum component of $\Teven - J$ has at least $2m^{2}-2m$ vertices.
\end{lemma}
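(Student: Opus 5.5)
I will mirror the proof of \cref{lem:lower_bound_bramble_odd}, adapted to $|V(\Teven)| = 4m^{2}$ and $|J| = 4m-1$. Since $4m-1 < 2\cdot 2m$, there are a row $R$ and a column $C$ with $|R \cap J| \le 1$ and $|C \cap J| \le 1$. As in the odd case, every component $H$ of $\Teven - J$ satisfies $\NTeven(V(H)) \subseteq J$. I split into two cases according to whether $(R\cup C)\setminus J$ lies in a single component.

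\textbf{Case 1: some component $H$ of $\Teven - J$ contains $(R \cup C) \setminus J$.} Assume $|V(H)| \le 2m^{2}-2m-1$, and let $Y = V(\Teven)\setminus (V(H)\cup J)$. Then
\[
|Y| \ge 4m^{2} - (2m^{2}-2m-1) - (4m-1) = 2m^{2}-2m+2 .
\]
Also $Y$ avoids $R\cup C$, and $\NTeven(Y)\subseteq J$ meets each of $R$ and $C$ in at most one vertex. If $Y$ is connected, it is itself a component of size at least $2m^{2}-2m$, and we are done. Otherwise $Y$ is disconnected with $|Y|\ge 3$. Then \cref{lem:almost-empty-row-and-column} together with \cref{lem:bGi_lower_bound} (second line, $s \ge 2m^{2}-2m+2$) gives
\[
|\NTeven(Y)| \ge \bGi(|Y|)-1 \ge 4m+1-1 = 4m > |J|,
\]
a contradiction.

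\textbf{Case 2: no component contains $(R\cup C)\setminus J$.} Then the vertex $v \in R\cap C$ lies in $J$, and $(R\cup C)\cap J=\{v\}$. Both $R\setminus\{v\}$ and $C\setminus\{v\}$ are connected, so $J\setminus\{v\}$, which has $4m-2$ vertices, separates them. This step is the main obstacle, because the path argument changes with parity. With $v$ at the center, the quadrants now have sizes $(m-1)\times(m-1)$, $(m-1)\times m$, $m\times(m-1)$, and $m\times m$: the remaining $2m-1$ rows split as $m-1$ and $m$, and likewise for columns. Using the L-shaped shortest paths $P_i$ from before, a $p\times q$ quadrant gives $\min\{p,q\}$ internally disjoint $(R\setminus\{v\})$--$(C\setminus\{v\})$ paths. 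This yields $(m-1)+(m-1)+(m-1)+m = 4m-3$ paths, which is fewer than $4m-2$, so a pure path-packing argument does not force the structure directly. I would instead argue by counting. Each of the $4m-3$ disjoint paths must contain a vertex of $J\setminus\{v\}$, so at most one vertex of $J$ is unused. The two sides $X \supseteq R\setminus\{v\}$ and $Y\supseteq C\setminus\{v\}$ each have at least $2m-1$ vertices.

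To get the size bound, I would avoid components altogether and bound by neighborhoods. Suppose every component has fewer than $2m^{2}-2m$ vertices. The component $X$ containing $R\setminus\{v\}$ avoids all of $C$ and at least one further column: if it met every other column, the row segments would link it to $C$, and with only $4m-2$ blockers I would check this carefully. So $X$ misses $C$ and, by connectivity arguments similar to those in \cref{lem:empty-row-and-column}, it misses a pair of usable rows or columns. Then apply \cref{lem:almost-empty-row-and-column} to the union $Z$ of all components other than the largest one on the $C$ side, with $|Z|$ large, exactly as in Case 1. I expect this to work: take $Z = V(\Teven)\setminus(J\cup V(H'))$, where $H'$ is the component containing $C\setminus\{v\}$. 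Then $Z$ avoids $C$, and $\NTeven(Z)\cap C\subseteq\{v\}$. For rows, pick instead a row $R'$ with $|R'\cap J|\le1$ different from $R$, if possible; otherwise handle $R$ itself, noting $R\cap \NTeven(Z)$ is only $\{v\}$ when $R\setminus\{v\}\subseteq H'$. If $|V(H')|<2m^{2}-2m$, then $|Z|\ge 2m^{2}-2m+2$, and the same computation as in Case 1 gives a contradiction unless $Z$ is connected, in which case $Z$ is the desired large component. Checking that suitable rows and columns exist in this case is the delicate part, where I would expect to spend most effort.
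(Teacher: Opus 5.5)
Your Case 1 is correct and coincides with the paper's. The branch of Case 2 in which some row $R'\neq R$ has $|R'\cap J|\le 1$ is also fine, and it is the paper's own reduction. The vertex of $R'\cap C$ is not $v$, hence not in $J$ (as $C\cap J=\{v\}$), so $(R'\cup C)\setminus J$ is connected and this is just Case 1 for the pair $(R',C)$; a column $C'\neq C$ is handled symmetrically.

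The gap is the remaining branch, in which every row other than $R$ and every column other than $C$ contains at least two vertices of $J$. Since $|J\setminus\{v\}|=4m-2$, each contains exactly two. Here your plan has nothing valid to feed into \cref{lem:almost-empty-row-and-column}:
\begin{itemize}
\item Your claim that the component $X$ of $R\setminus\{v\}$ misses some column other than $C$ is false, because $R\setminus\{v\}$ itself meets every column other than $C$.
\item Your fallback of ``handling $R$ itself'' is void. The hypothesis of Case 2 is precisely that $R\setminus\{v\}\not\subseteq H'$, so $Z\supseteq R\setminus\{v\}$ meets $R$. No row avoided by $Z$ and containing at most one vertex of $\NTeven(Z)$ is then available.
\end{itemize}
You are right that the L-shaped paths give only $4m-3$ disjoint paths, and this is sharp. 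The $m$-boundary of $v+(m,0)$ consists of $v$ and $4m-3$ further vertices off $R\cup C$, and it separates $R\setminus\{v\}$ from $C\setminus\{v\}$. So no refinement of path counting will close this branch either.

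The missing idea is that this branch cannot occur, which the paper shows by a direct forcing argument rather than by isoperimetry.
\begin{itemize}
\item Each of $v+(\pm1,\pm1)$ is adjacent both to $R\setminus\{v\}\subseteq X$ and to $C\setminus\{v\}\subseteq Y$, with $X\neq Y$. Hence all four lie in $J$.
\item These four already supply the two $J$-vertices of each of the rows $R\pm1$ and columns $C\pm1$. So every other vertex of these lines is outside $J$ and is assigned to $X$ or $Y$ through its neighbour in $R$ or $C$.
\item Now $v+(\pm2,\pm2)$ are adjacent to both $X$ and $Y$, so they lie in $J$. Iterating forces $J=\{v\}\cup\{v+(\pm d,\pm d)\mid d\in[m]\}$.
\end{itemize}
The four vertices $v+(\pm m,\pm m)$ coincide, so this set has only $4m-2$ elements; it is the $m$-boundary of $v+(m,0)$. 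This contradicts $|J|=4m-1$. Hence Case 2 always reduces to Case 1, and the lemma follows.
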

\begin{proof}
Since $|J| = 4m-1$, there are a row $R$ and a column $C$ such that $|R \cap J| \le 1$ and $|C \cap J| \le 1$.

\proofsubparagraph{Case 1: There is a connected component of $\Teven - J$ that contains $(R \cup C) \setminus J$}
Let $X$ be the vertex set of the connected component of $\Teven - J$ such that $(R \cup C) \setminus J \subseteq X$.
Assume that $|X| \le 2m^{2}-2m-1$ since otherwise we are done.
Let $Y = V(\Teven) \setminus (X \cup J)$. It follows that $|Y| \ge (2m)^{2} -(2m^{2}-2m-1)-(4m-1) = 2m^{2}-2m+2$,
and thus, if $Y$ is connected, we are done.
Assume that $Y$ is disconnected.
Since $(R \cup C) \cap Y = \emptyset$, $\NTeven(Y)\subseteq J$, $|R \cap J| \le 1$, $|C \cap J| \le 1$, and $|Y| \ge 2m^{2}-2m+2 \ge 3$,
\cref{lem:almost-empty-row-and-column} implies that $|\NTeven(Y)| \ge \bGi(|Y|) - 1$.
By \cref{lem:bGi_lower_bound} with $|Y| \ge 2m^{2}-2m+2$,
we have $\bGi(|Y|) \ge 4m+1$.
This implies $|\NTeven(Y)| \ge (4m+1) - 1 = 4m > |J|$, a contradiction.

\proofsubparagraph{Case 2: There is no connected component of $\Teven - J$ that contains $(R \cup C) \setminus J$}
Let $R \cap C = \{v\}$.
By the assumptions, $R \cap J = C \cap J = \{v\}$.
Let $X$ and $Y$ be the vertex sets of the connected components of $\Teven - J$
that contain $R \setminus \{v\}$ and $C \setminus \{v\}$, respectively.
The assumption implies that $X \ne Y$.

If there is another row $R' \ne R$ that satisfies $|R' \cap J| \le 1$, then $(R' \cup C) \setminus J$ is connected,
and thus, we are done by the previous case.
Thus, we assume that each row other than $R$ includes at least two vertices of $J$.
Indeed, since $|J \setminus \{v\}| = 4m-2$, each of these $2m-1$ rows has exactly two vertices of $J$.
By the same argument for columns, it follows that each column other than $C$ includes exactly two vertices of $J$.

Now we show that the current case gives rise to a contradiction that $|J| < 4m-1$.
The proof here is similar to the last paragraphs of the proof of \cref{lem:only_m-boundary_for_two_2m^2-2m}.
See \cref{fig:only_m-boundary_J}. Consider the four vertices $v+(1,1)$, $v+(1,-1)$, $v+(-1,1)$, $v+(-1,-1)$.
All of them are adjacent to both $X$ and $Y$, and thus they belong to $J$.
Now the rows and columns next to $R$ and $C$ already have two elements of $J$ and hence cannot have any other elements of $J$.
The remaining vertices in those rows and columns belong to $X$ or $Y$ depending on which set they are adjacent to.
Then, it follows that the four vertices $v+(2,2)$, $v+(2,-2)$, $v+(-2,2)$, $v+(-2,-2)$ are adjacent to both $X$ and $Y$ and thus belong to $J$.
We repeat this process $m$ times and then after all we obtain
\[
  J = \{v\} \cup \{v+(d,d), \, v+(d,-d), \, v+(-d,d), \, v+(-d,-d) \mid d \in [m]\}.
\]
Since $v+(m,m) = v+(-m,-m) = v+(m,-m) = v+(-m,m)$, it follows that $|J| = 4m-2 < 4m-1$, a contradiction.
(Note that this set $J$ is the $m$-boundary of $v + (m,0)$.)
\end{proof}
\begin{figure}[tbh]
  \centering
  \includegraphics[width=\textwidth]{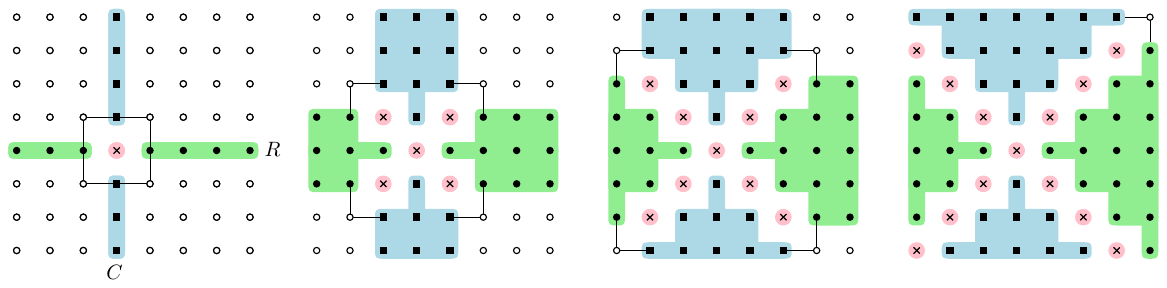}
  \caption{$J = \{v, v+(m,m)\} \cup \{v+(d,d), \, v+(d,-d), \, v+(-d,d), \, v+(-d,-d) \mid d \in [m-1]\}$.}
  \label{fig:only_m-boundary_J}
\end{figure}

By \cref{lem:m-ball-properties,lem:even_lower_bound_2m^2-2m}, we have the following lower bounds on the size of elements of $\mathcal{E}$.
\begin{observation}
\label{obs:even-bramble-element-size-lb}
Each element of $\mathcal{E}_{1}$ has size at least $2m^{2}-2m$
and
each element of $\mathcal{E}_{2}$ has size at least $2m^{2}-2m-1$.
\end{observation}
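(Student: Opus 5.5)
The observation splits according to the two parts of $\mathcal{E}$, and each part follows from one earlier lemma.

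\emph{Elements of $\mathcal{E}_{1}$.} Take $V(H_{J}) \in \mathcal{E}_{1}$, where $J \subseteq V(\Teven)$ is $m$-boundary-free with $|J| = 4m-1$. By construction, $H_{J}$ is a maximum component of $\Teven - J$. \Cref{lem:even_lower_bound_2m^2-2m} holds for every set of size $4m-1$ when $m \ge 2$, and being $m$-boundary-free is an extra hypothesis it does not need. So it applies directly and gives $|V(H_{J})| \ge 2m^{2}-2m$. The choice among several maximum components does not matter, since they all have the same size.

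\emph{Elements of $\mathcal{E}_{2}$.} Take $B_{M}^{v} \in \mathcal{E}_{2}$, where $M$ is an $m$-boundary set and $v \in B_{M}$. By \cref{lem:m-boundary-properties}~(\ref{itm:m-boundary-properties:two-balls}), $B_{M}$ is an $(m-1)$-ball centered at some vertex $c$. By definition, $B_{M}^{v}$ is the vertex set of the unique maximum component of $\Teven[B_{M} \setminus \{v\}]$. \Cref{lem:m-ball-properties} (with $m \ge 3$, which is the standing assumption of the construction) then gives $|B_{M}^{v}| \ge 2m^{2}-2m-1$.

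There is no real obstacle here. The only points to check are that the hypotheses of the invoked lemmas are satisfied, namely $m \ge 3$ for \cref{lem:m-ball-properties}, and that the sets in $\mathcal{E}_{2}$ are exactly the components described in that lemma.
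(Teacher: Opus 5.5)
Your proposal is correct and is exactly the paper's argument. The bound for $\mathcal{E}_{1}$ comes from \cref{lem:even_lower_bound_2m^2-2m}, which holds for every $J$ of size $4m-1$ and so does not need $m$-boundary-freeness. The bound for $\mathcal{E}_{2}$ comes from \cref{lem:m-ball-properties} applied to the $(m-1)$-ball $B_{M}$, using the standing assumption $m \ge 3$.
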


Now we are ready to complete the proof.

\begin{lemma}
\label{lem:even-bramble-touch}
Any two elements of $\mathcal{E}$ touch.
\end{lemma}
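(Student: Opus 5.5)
The plan is to argue by contradiction: take $X, Y \in \mathcal{E}$ that do not touch, and split into three cases by where $X$ and $Y$ come from ($\mathcal{E}_{2}$--$\mathcal{E}_{2}$, $\mathcal{E}_{1}$--$\mathcal{E}_{2}$, $\mathcal{E}_{1}$--$\mathcal{E}_{1}$). Throughout I use the size bounds of \cref{obs:even-bramble-element-size-lb}, the identity $|V(\Teven)| = 4m^{2}$, and the fact that non-touching means $Y \subseteq V(\Teven) \setminus \NTeven[X]$.

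\emph{Cases involving $\mathcal{E}_{2}$.} Let $X = B_{M}^{v}$.

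\begin{enumerate}
\item By \cref{lem:m-ball-properties}, $\NTeven(X) = M \cup \{v\}$. The components of $\Teven - (M \cup \{v\})$ are $X$, at most one further piece inside $B_{M}$ of size at most $1$ (from \cref{clm:ball-cut}), and the ball $B_{M}'$.
\item Since $Y$ is connected, avoids $\NTeven[X]$, and has size at least $2m^{2}-2m-1 > 1$ (here $m \ge 3$), we get $Y \subseteq B_{M}'$.
\item If $Y = B_{M'}^{w}$ with $M' = M$, then $Y \subseteq B_{M}$, which is disjoint from $B_{M}'$. Hence $M' \ne M$.
\item If $Y = B_{M'}^{w}$ with $M' \neq M$: $Y$ misses at most two vertices of $B_{M}'$, while $|B_{M}'| = 2m^{2}-2m+1$. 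Also $\NTeven(Y) = M' \cup \{w\}$. From this I want to show that the ball $B_{M'}$ containing $Y$ coincides with $B_{M}'$, by comparing centers via the $(m-1)$-layer structure, or simply because two distinct $(m-1)$-balls cannot share $2m^{2}-2m-1$ vertices. That forces $M' = M$ by \cref{lem:m-boundary-properties}, a contradiction.
\item If $Y = V(H_{J}) \in \mathcal{E}_{1}$: $Y$ is a connected subset of $B_{M}'$ missing at most one vertex $u$ of $B_{M}'$. Then either $Y = B_{M}'$, so $\NTeven(Y) = M$, or $Y$ is the maximum component of $B_{M}' \setminus \{u\}$, so $\NTeven(Y) = M \cup \{u\}$ by \cref{lem:m-ball-properties}. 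Either way $M \subseteq \NTeven(Y) \subseteq J$, contradicting that $J$ is $m$-boundary-free.
\end{enumerate}

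\emph{Case $X = V(H_{J})$, $Y = V(H_{J'})$, both in $\mathcal{E}_{1}$.} This is the main obstacle. Let $W$ be the component of $\Teven - \NTeven(X)$ containing $Y$.

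\begin{enumerate}
\item If $|\NTeven(X)| \le 4m-2$, then $X$ and $W$ are two components of $\Teven - \NTeven(X)$, each of size at least $2m^{2}-2m$. By \cref{lem:only_m-boundary_for_two_2m^2-2m}, $\NTeven(X)$ is an $m$-boundary set contained in $J$, a contradiction.
\item So $\NTeven(X) = J$, and symmetrically $\NTeven(Y) = J'$.
\item Counting gives $|X| + |Y| \le 4m^{2} - (4m-1) = 4m^{2}-4m+1$. Hence one of them, say $X$, has size exactly $2m^{2}-2m$, and $|V(\Teven) \setminus \NTeven[X]| = 2m^{2}-2m+1$.
\item Every other component of $\Teven - J$ is at most as large as $X$, because $X$ is a maximum component. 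So $Y$ is essentially all of $V(\Teven) \setminus \NTeven[X]$: either $Y$ equals it, or $Y$ misses a single vertex $z$.
\item If $Y$ misses $z$, then $\NTeven(Y) \subseteq J \cup \{z\}$ with $|\NTeven(Y)| = |J'| = 4m-1$.
\item I then aim to show $J' = J$. This needs care in the sub-case with the extra vertex $z$. There I would use the $4m-2$ bound on the smaller separator to again invoke \cref{lem:only_m-boundary_for_two_2m^2-2m}, contradicting $m$-boundary-freeness of $J$ or $J'$.
\item Once $J' = J$, both $\Teven[X]$ and $\Teven[Y]$ are components of $\Teven - J$ with $\NTeven(X) = \NTeven(Y) = J$. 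Exactly as in the proof of \cref{lem:odd_touch}, each can only arise as $H_{J}$, and only one component of $\Teven - J$ is chosen, a contradiction.
\end{enumerate}

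The delicate bookkeeping is the extra-vertex sub-case: ruling out that $Y$ differs from $V(\Teven) \setminus \NTeven[X]$ by one vertex while $J \ne J'$.
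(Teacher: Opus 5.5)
You follow the paper's case split, and most steps match it. In the $\mathcal{E}_{2}$ case you trap $Y$ inside $B_{M}'$ and rule out $Y \in \mathcal{E}_{1}$ via $M \subseteq \NTeven(Y) \subseteq J$. In the $\mathcal{E}_{1}$--$\mathcal{E}_{1}$ case you use \cref{lem:only_m-boundary_for_two_2m^2-2m} to force $\NTeven(X) = J$ and $\NTeven(Y) = J'$, and your final uniqueness argument is right. There is, however, a genuine gap exactly where you flag one: you never prove $J' = J$ in the sub-case $Y = (V(\Teven) \setminus \NTeven[X]) \setminus \{z\}$. The route you propose, finding a separator of size $4m-2$ and re-applying \cref{lem:only_m-boundary_for_two_2m^2-2m}, is not evidently available. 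Suppose $z \in J'$ and the unique $j \in J \setminus J'$ is adjacent to $z$. Then $V(\Teven) = X \cup Y \cup (J \cap J') \cup \{j,z\}$, and the path $X$--$j$--$z$--$Y$ avoids $J \cap J'$. So the natural size-$(4m-2)$ candidate does not separate $X$ from $Y$.

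The missing idea is one you already state in your step~4 but do not use: $X$ is a maximum component of $\Teven - J$. Let $W$ be the component of $\Teven - J$ containing $Y$. Then $|Y| \le |W| \le |X| = 2m^{2}-2m \le |Y|$, so $Y = W$ is itself a component of $\Teven - J$. Hence $\NTeven(Y) \subseteq J$, and $J' = \NTeven(Y) = J$ by cardinality. In your terms: if $z$ were adjacent to $Y$, then $Y \cup \{z\}$ would be a connected subset of $\Teven - J$ larger than $X$; otherwise $\NTeven(Y) \subseteq J$ directly. The paper argues the same way without your size normalization. If $Y$ were properly contained in a component $Y'$ of $\Teven - \NTeven(X)$, then $|X| \ge |Y'| \ge 2m^{2}-2m+1$, giving $|X| + |Y'| + |\NTeven(X)| \ge 4m^{2}+1$, which is impossible.

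A smaller gap is in item~4 of your $\mathcal{E}_{2}$ case. You only assert that two distinct $(m-1)$-balls cannot share $2m^{2}-2m-1$ vertices. This is true, but on the torus it needs an argument. The paper supplies one: up to symmetry it writes the offset between the two centers as $(i,j)$ with $0 \le i \le j \le m$ and $1 \le i+j < 2m$. For each such offset it exhibits three explicit vertices of the ball containing $Y$ that lie outside $B_{M}'$.
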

\begin{proof}
Let $X, Y \in \mathcal{E}$. Suppose to the contrary that $X$ and $Y$ do not touch.

\proofsubparagraph{Case 1: At least one of $X$ and $Y$ belongs to $\mathcal{E}_{2}$}
By symmetry, we may assume that $X \in \mathcal{E}_{2}$.
From the definition of $\mathcal{E}_{2}$, $X = B_{M}^{v}$ such that $M$ is an $m$-boundary set and $v \in B_{M}$.
Since $m \ge 3$, \cref{lem:m-ball-properties} implies that $\NTeven(X) = M \cup \{v\}$.
Since $X$ and $Y$ do not touch, $Y \subseteq V(\Teven) \setminus \NTeven[X]$. 
Since $|Y| \ge 2m^{2}-2m-1$ ($> 1$) by \cref{obs:even-bramble-element-size-lb}
and 
\begin{align*}
  |V(\Teven) \setminus (\NTeven[X] \cup B_{M}')| 
  &= 
  |V(\Teven)| - |\NTeven[X]| - |B_{M}'| 
  = 
  |V(\Teven)| - |X| - |M \cup \{v\}| - |B_{M}'| 
  \\
  &\le 
  4m^{2} - (2m^{2}-2m-1) - (4m-1) - (2m^{2}-2m+1) 
  = 1,
\end{align*}
it follows that $Y \subseteq B_{M}'$ as $Y$ is connected.

We first show that $Y \in \mathcal{E}_{2}$.
Suppose to the contrary that $Y \in \mathcal{E}_{1}$, and thus there is a set $J$ such that $Y$ is the vertex set of a maximum component of $\Teven - J$.
By \cref{obs:even-bramble-element-size-lb}, we have $|Y| \ge 2m^{2}-2m$.
Since $B_{M}'$ is an $(m-1)$-ball, $|B_{M}' \setminus Y| \le 1$ by \cref{lem:m-1-ball-size}.
Hence, $Y$ is a connected subgraph of $\Teven[B_{M}']$ with at most one missing vertex.
This implies, with \cref{lem:m-ball-properties}, that $M \subseteq \NTeven(Y) \subseteq J$, contradicting that $J$ is $m$-boundary-free.

Since $Y \in \mathcal{E}_{2}$, there is an $m$-boundary set $L$ such that $Y \subseteq B_{L}$.
If $L = M$, then $B_{L} \cap B_{M}' = \emptyset$, contradicting that $Y \subseteq B_{L} \cap B_{M}'$. Thus, $L \ne M$.
Let $c$ and $c'$ be the centers of $B_{L}$ and $B_{M}'$, respectively.
Assume without loss of generality that $c' = c + (i,j)$ for $0 \le i \le j \le m$ with $1 \le i+j < 2m$.
\begin{itemize}
  \setlength{\itemsep}{0pt}
  \item If $(i,j) = (0,1)$, then $\{c + (m-1,0), c + (-(m-1),0), c + (0,-(m-1))\} \subseteq B_{L} \setminus B_{M}'$.
  \item If $(i,j) = (1,1)$, then $\{c + (-(m-2),-1), c + (-(m-1),0), c + (0,-(m-1))\} \subseteq B_{L} \setminus B_{M}'$.
  \item Otherwise $j \ge 2$, and thus
  $\{c + (-1, -(m-j)), c + (0, -(m-j)), c + (1, -(m-j)) \} \subseteq B_{L} \setminus B_{M}'$.
\end{itemize}
It follows that $|Y| \le |B_{L} \cap B_{M}'| \le |B_{L}| - 3 = (2m^{2}-2m+1) -3 < 2m^{2}-2m -1$,
a contradiction to \cref{obs:even-bramble-element-size-lb}.

\proofsubparagraph{Case 2: $X, Y \in \mathcal{E}_{1}$}
By the definition of $\mathcal{E}_{1}$, there are $m$-boundary-free sets $J_{X}$ and $J_{Y}$ of size $4m-1$ such that
$X$ and $Y$ are the vertex sets of maximum components of $\Teven - J_{X}$ and $\Teven - J_{Y}$, respectively.
We have $|\NTeven(X)| \le 4m-1$ and $|\NTeven(Y)| \le 4m-1$ since $\NTeven(X) \subseteq J_{X}$ and $\NTeven(Y) \subseteq J_{Y}$.

We now show that $|\NTeven(X)| = |\NTeven(Y)| = 4m - 1$.
\cref{obs:even-bramble-element-size-lb} implies $|X|, |Y| \ge 2m^{2}-2m$.
Since $X$ and $Y$ do not touch, $Y$ is a subset of the vertex set $Y'$ of some connected component of $\Teven - \NTeven(X)$.
If $|\NTeven(X)| \le 4m - 2$, then \cref{lem:only_m-boundary_for_two_2m^2-2m} implies that $\NTeven(X)$ is an $m$-boundary set.
This contradicts the assumptions that $J_{X}$ ($\supseteq \NTeven(X)$) is $m$-boundary-free.
Thus, we have $|\NTeven(X)| = 4m - 1$.
By the same argument, we can show that $|\NTeven(Y)| = 4m - 1$.

We next show that indeed $\NTeven(X) = \NTeven(Y)$.
If $Y$ is the vertex set of a connected component of $\Teven - \NTeven(X)$,
then $\NTeven(Y) \subseteq \NTeven(X)$ holds, and thus $\NTeven(X) = \NTeven(Y)$ as $|\NTeven(X)| = |\NTeven(Y)|$.
Assume that $Y$ is not the vertex set of any connected component of $\Teven - \NTeven(X)$.
This implies that there is a connected component of $\Teven - \NTeven(X)$ whose vertex set, say $Y'$, properly contains~$Y$.
Now we have $|Y'| \ge |Y| + 1 \ge 2m^{2}-2m+1$.
Since $X$ is the vertex set of a maximum component of $\Teven - \NTeven(X)$,
it holds that $|X| \ge |Y'| \ge 2m^{2}-2m+1$. This gives a contradiction, since
\[
  |Y'| + |X| + |\NTeven(X)| \ge 2 \cdot (2m^{2}-2m+1) + (4m-1) = 4m^{2} +1 > |V(\Teven)|.
\]

Since $\NTeven(X) \subseteq J_{X}$ and both sets have size $4m-1$, we have $J_{X}=\NTeven(X)$; similarly $J_{Y}=\NTeven(Y)$. 
To complete the proof, observe that $J_{X}$ ($= J_{Y}$) is the unique set of size $4m-1$ such that
$X$ and $Y$ appear as the vertex sets of connected components of $\Teven - \NTeven(X)$ ($=\Teven - \NTeven(Y)$).
This contradicts the definition of $\mathcal{E}_{1}$, which includes the vertex set of at most one connected component of $\Teven - J$ for each vertex set $J$ of size $4m-1$.
\end{proof}

\section{Concluding remarks}
\label{sec:conclusion}

In this paper, we showed that the treewidth of the $n \times n$ toroidal grid is $2n-1$ for all $n \ge 5$
by constructing brambles using an approach different from those in previous studies.
Applying our approach to other graphs with unknown treewidth might be useful in some cases.
For example, our approach may be useful to tackle the remaining case $\tw(\torus{n,n+1}) \in \{2n-1,2n\}$~\cite{AidunDMYY20}.

Note that our bramble for $\Tn$ has size potentially exponential in~$n$,
while the previous ones for $\Tn$ have size polynomial in~$n$~\cite{Wood13,KiyomiOO16,AidunDMYY20}.
It is known that some graphs admit only super-polynomial size brambles of maximum order~\cite{GroheM09}.
It would be interesting to ask if $\Tn$ admits a maximum-order bramble of polynomial size.
For example, it is well known that the $n \times n$ grid admits a maximum-order bramble with $\Theta(n^{2})$ elements (see~\cite{BodlaenderGK08}). 

\bibliographystyle{plainurl}
\bibliography{ref}

\end{document}